\newcounter{stepnb}
\tikzstyle{nodo}=[circle,draw,fill,inner sep=0pt,minimum size=%
\tikzstyle{infinito}=[circle,inner sep=0pt,minimum size=0mm]
\newtheorem{theorem}{Theorem}[section]
\newtheorem{lemma}[theorem]{Lemma}
\newtheorem{remark}[theorem]{Remark}
\newtheorem{definition}[theorem]{Definition}
\numberwithin{equation}{section}
\newcommand{\R}{\mathbb{R}}
\DeclareMathOperator*{\essup}{ess\,sup}
\newcommand{\ee}{\varepsilon}
\newcommand{\be}{\begin{equation}}
\newcommand{\eq}{\end{equation}}
\renewcommand{\div}{{\rm div}\,}
\newcommand{\weaks}{\stackrel{*}{\rightharpoonup}}
\newcommand{\loc}{\mathrm{loc}}
\begin{document}
\title[Nonlocal conservation laws with $BV$ kernels]{On multidimensional nonlocal conservation laws \\ with $BV$ kernels
}

\author[M.~Colombo]{Maria Colombo}
\address{M.C. EPFL B, Station 8, CH-1015 Lausanne, Switzerland.}
\email{maria.colombo@epfl.ch}
\author[G.~Crippa]{Gianluca Crippa}
\address{G.C. Departement Mathematik und Informatik,
Universit\"at Basel, Spiegelgasse 1, CH-4051 Basel, Switzerland.}
\email{gianluca.crippa@unibas.ch}
\author[L.~V.~Spinolo]{Laura V.~Spinolo}
\address{L.V.S. IMATI-CNR, via Ferrata 5, I-27100 Pavia, Italy.}
\email{spinolo@imati.cnr.it}
\maketitle
{
\rightskip .85 cm
\leftskip .85 cm
\parindent 0 pt
\begin{footnotesize}
We establish local-in-time existence and uniqueness results for nonlocal conservation laws in several space dimensions under weak (that is, Sobolev or $BV$) differentiability assumptions on the convolution kernel. In contrast to the case of a smooth kernel, in general the solution experiences finite-time  blow up. We provide an explicit example showing that solutions corresponding to different smooth approximations of the convolution kernel in general converge to different measures after the blow-up time. This rules out a fairly natural strategy for extending the notion of solution of the nonlocal conservation law after the blow-up time. 

\medskip

\noindent
{\sc Keywords:} nonlocal conservation laws in several space dimensions, models for pedestrian traffic, well-posedness, $BV$ kernels, nonuniqueness, lack of selection


\medskip\noindent
{\sc MSC (2020):  35L65, 35A01, 35R06}

\end{footnotesize}
}

\section{Introduction}
We consider the following Cauchy problem for nonlocal conservation laws in several space dimensions, 
\begin{equation}
\label{e:nonlocal}
     \left\{
     \begin{array}{ll}
    \partial_t u + \mathrm{div} [u V (t, x, u \ast \eta)] =0 , \\
     u(0, \cdot) = u_0 , \\
    \end{array}
     \right.
\end{equation}
where $u:  \R_+ \times \R^d \to \R$ is the unknown, $V: \R_+ \times \R^d \times \R^N \to \R^d$ is a Lipschitz continuous function, and $\eta \in L^1_{\loc} (\R^d; \R^N)$ is a convolution kernel. We denote by $\mathrm{div}$ the divergence computed with respect to the space variable only, whereas the symbol $\ast$ stands for the convolution with respect to the space variable only, that is 
$$
    u \ast \eta (t, x) : = \int_{\R^d} u(t, x-y) \eta(y) dy.
$$
In this work we establish local-in-time existence and uniqueness results for~\eqref{e:nonlocal} under fairly weak differentiability assumption on $\eta$, in particular we focus on the case of Sobolev and $BV$ (bounded total variation) regularity. We also consider an explicit example where the solution $u$ experiences finite time blow up and show that, in general, solutions corresponding to different smooth approximations of $\eta$ converge to different measures after the blow-up time. 

In the last few years, the analysis of nonlocal conservation laws has attracted quite some attention, owing to the wide range of applications spanning from sedimentation~\cite{Sedimentation} to supply chains~\cite{CHM}, and pedestrian~\cite{CGLM,PiccoliTosin} and vehicular~\cite{Chiarello} traffic models. In the vehicular traffic framework, corresponding to the one-dimensional case $x \in \R$, several recent works have focused on the case of \emph{anisotropic} convolutions kernels, that is, $\eta$ is supported on the negative real axis~$\R_-$. This aims at modeling the fact that drivers tune their velocity based on the downstream traffic density only. We refer in particular to the original paper~\cite{BlandinGoatin}, the recent overview~\cite{Chiarello} and the references therein for a non exaustive list of recent contributions establishing global-in-time existence and uniqueness results, maximum principle and many other properties. Anisotropic convolution kernels typically have at least one jump discontinuity at the origin, and as such do not satisfy the smoothness properties required in previous works on nonlocal conservation laws, see for instance~\cite{GianMag}. In the one-dimensional case, a systematic treatment of nonlocal conservation laws with $BV$ kernels 
is provided in~\cite[Proposition 3.2]{GoatinRossi} and~\cite{CocliteDeNittiKeimerPflug}. At the end of~\cite{CocliteDeNittiKeimerPflug} the authors quote as an open question the extension of their analysis to the multidimensional case. In the present work, we first of all provide an answer to this question. Note that well-posedness results for the nonlocal Cauchy problem~\eqref{e:nonlocal} in the multidimensional case were already well known, see for instance in~\cite{ACG,GianMag,KPS}, but to the best of our knowledge all previous results require much stronger regularity on $\eta$ than the one we consider in the following.

A primary motivation for our analysis comes from pedestrian traffic or crowd dynamics models: as an example, in~\cite{PiccoliTosin} the authors model 
pedestrian flows with equations similar to~\eqref{e:nonlocal} under the choice 
$$
    V (t, x, u\ast \eta)= V_d (t, x) + [u \ast \eta] (t, x),
$$ 
where $u$ represents the crowd density and $V_d$ the desired velocity, that is the velocity that a pedestrian at the point $x$ at time $t$ would have if there were no crowd. The term $ u \ast  \eta$ represents a correction term modeling the fact that pedestrians want to avoid very crowded areas. In~\cite[formula (22)]{PiccoliTosin} the authors consider the discontinuous convolution kernel 
$$
    \eta (x) : = \frac{x}{|B_r(0)|} \mathbbm{1}_{B_r(0)} (x),
$$
where $B_r(0)$ denotes the ball of radius $r$ centered at the origin, and $\mathbbm{1}$ the characteristic function. See also~\cite{CGLM,CHM,CDFFL,ColomboRossi,PiccoliRossi} for related models. We now state our main existence result. 
\begin{theorem} \label{t:ex}
Let $V: \R_+ \times \R^d \times \R^N \to \R^d$  be an $L$-Lipschitz continuous function. Then the following holds. 
\begin{itemize}
\item[(i)] If $\eta \in BV (\R^d;\R^N)$ and $u_0 \in L^\infty (\R^d)$, then there is $T^\ast>0$, only depending on $L$, $d$, $N$,  $|D \eta| (\R^d)$ and on $\| u_0 \|_{L^\infty}$, 
and a distributional solution  $u \in L^\infty_\loc ([0, T^\ast[ ; L^\infty(\R^d))$
of the Cauchy problem~\eqref{e:nonlocal}.  
\item[(ii)] Fix $p>1$ and let $q = \sfrac{p}{(p-1)}$. If $\eta \in W^{1, p} (\R^d;\R^N)$ and $u_0 \in L^q (\R^d)$, then there is $T^\ast_q>0$, only depending on $L$, $d$, $N$, $\| \nabla \eta \|_{L^p (\R^d; \R^N \times \R^d)}$ and on $\| u_0 \|_{L^q (\R^d)}$, 
and a distributional solution $u \in L^\infty_\loc([0, T^\ast_q[ ; L^q(\R^d))$ of the Cauchy problem~\eqref{e:nonlocal}. 
\end{itemize}
\end{theorem}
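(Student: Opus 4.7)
The strategy is to construct $u$ as the weak-$\ast$ (resp.\ weak) limit of solutions corresponding to smooth approximations of the kernel. Let $\rho_\varepsilon$ be a standard mollifier and set $\eta_\varepsilon := \eta \ast \rho_\varepsilon$, which is smooth and satisfies the uniform bounds $\|\nabla\eta_\varepsilon\|_{L^1} \le |D\eta|(\R^d)$ in case (i) and $\|\nabla\eta_\varepsilon\|_{L^p} \le \|\nabla\eta\|_{L^p}$ in case (ii), together with $\eta_\varepsilon \to \eta$ in $L^1_{\loc}$ (resp.\ in $W^{1,p}$). For each smooth $\eta_\varepsilon$, classical well-posedness results for nonlocal conservation laws with smooth kernels in the spirit of~\cite{GianMag, KPS}, or equivalently a direct Picard iteration in $C^0_t L^\infty_x$ combined with the method of characteristics (using the Lipschitz regularity of $V$), yield a solution $u_\varepsilon$ on some maximal interval of existence.

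The heart of the argument is a uniform $\varepsilon$-independent estimate. Although $\nabla\eta_\varepsilon$ blows up in $L^\infty$ as $\varepsilon \to 0$, the convolutions $u_\varepsilon \ast \nabla\eta_\varepsilon$ satisfy
\begin{equation*}
\|u_\varepsilon \ast \nabla\eta_\varepsilon\|_{L^\infty} \le \|u_\varepsilon\|_{L^\infty}\, |D\eta|(\R^d), \qquad \|u_\varepsilon \ast \nabla\eta_\varepsilon\|_{L^\infty} \le \|u_\varepsilon\|_{L^q}\, \|\nabla\eta\|_{L^p},
\end{equation*}
in cases (i) and (ii) respectively, the second being H\"older's inequality. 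Together with the $L$-Lipschitz character of $V$, this gives the pointwise bound $\|\mathrm{div}_x V(t,\cdot, u_\varepsilon \ast \eta_\varepsilon)\|_{L^\infty} \le L\bigl(1 + C\, m_\varepsilon(t)\bigr)$, where $m_\varepsilon(t) := \|u_\varepsilon(t)\|_{L^\infty}$ and $C := |D\eta|(\R^d)$ in case (i), and $m_\varepsilon(t) := \|u_\varepsilon(t)\|_{L^q}$ and $C := \|\nabla\eta\|_{L^p}$ in case (ii). Inserting this bound into the representation formula along characteristics (case (i)) or into the $L^q$ energy estimate obtained by multiplying the equation by $|u_\varepsilon|^{q-2} u_\varepsilon$ and integrating by parts (case (ii)) produces
\begin{equation*}
m_\varepsilon(t) \le m_\varepsilon(0) \exp\!\left( L t + L C \int_0^t m_\varepsilon(s)\, ds \right),
\end{equation*}
and the associated Gronwall-type ODE blows up at a finite time $T^\ast$ depending only on $L$, $C$, and $m_\varepsilon(0)$, that is on $L$, $d$, $N$, $|D\eta|(\R^d)$ and $\|u_0\|_{L^\infty}$ in case (i), and on the analogous quantities in case (ii). On any compact subinterval of $[0, T^\ast[$ we therefore obtain a uniform bound on $m_\varepsilon$, which in particular prevents the maximal existence times of the $u_\varepsilon$ from shrinking to zero.

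Given this uniform bound, I extract a subsequence converging weakly-$\ast$ in $L^\infty_\loc([0, T^\ast[; L^\infty)$ in case (i) and weakly in $L^\infty_\loc([0, T^\ast[; L^q)$ in case (ii), to some limit $u$. The convolutions $u_\varepsilon \ast \eta_\varepsilon$ are simultaneously uniformly bounded, uniformly Lipschitz in $x$ (by the estimate of the previous paragraph), and uniformly Lipschitz in $t$, since $\partial_t(u_\varepsilon \ast \eta_\varepsilon) = -\bigl(u_\varepsilon V(\cdot, u_\varepsilon \ast \eta_\varepsilon)\bigr) \ast \nabla\eta_\varepsilon$ is controlled by the very same estimate. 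By Arzel\`a--Ascoli, a further subsequence converges locally uniformly $u_\varepsilon \ast \eta_\varepsilon \to u \ast \eta$; continuity of $V$ then yields $V(t,x, u_\varepsilon \ast \eta_\varepsilon) \to V(t,x, u \ast \eta)$ strongly and boundedly, which is exactly what is needed to pass to the limit in the nonlinear flux $u_\varepsilon V(\cdot, u_\varepsilon \ast \eta_\varepsilon)$ in the distributional formulation of~\eqref{e:nonlocal}. The main obstacle is precisely the uniform control of $u_\varepsilon \ast \nabla\eta_\varepsilon$ highlighted above: it is the unique place where the weak (BV or $W^{1,p}$) regularity of $\eta$ enters quantitatively, and it is also what forces the existence time $T^\ast$ to be finite.
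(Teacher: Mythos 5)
Your broad strategy — mollify the kernel, use the $BV$/$W^{1,p}$ structure to obtain an $\varepsilon$-independent a priori bound via Grönwall, then pass to the limit — is the same as the paper's, and your a priori estimate (the bound on $\|\div b\|_{L^\infty}$, the Grönwall inequality for $m_\varepsilon$, and the resulting finite $T^\ast$ depending only on the claimed quantities) is correct. The difference lies entirely in the compactness step used to pass to the limit in the nonlinear flux, and there your argument has a genuine gap.

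You claim $u_\varepsilon \ast \eta_\varepsilon$ is \emph{uniformly Lipschitz in $t$} because $\partial_t(u_\varepsilon \ast \eta_\varepsilon) = \pm \bigl(u_\varepsilon V(\cdot,\cdot,u_\varepsilon\ast\eta_\varepsilon)\bigr) \ast \nabla\eta_\varepsilon$ is ``controlled by the very same estimate'' as $u_\varepsilon \ast \nabla\eta_\varepsilon$. That estimate bounds $\|w\ast\nabla\eta_\varepsilon\|_{L^\infty}$ by $\|w\|_{L^\infty}\,|D\eta|(\R^d)$, so you would need $u_\varepsilon V$ to lie in $L^\infty$ uniformly in $\varepsilon$. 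But $V$ is only Lipschitz, hence in general $|V(t,y,\xi)|\lesssim 1+|y|+|\xi|$ grows linearly in the space variable, so $u_\varepsilon V \notin L^\infty(\R^d)$. Trying to salvage this by estimating
\[
\bigl|(u_\varepsilon V)\ast\nabla\eta_\varepsilon(t,x)\bigr|
\;\lesssim\;
\|u_\varepsilon\|_{L^\infty}
\int \bigl(1+|x-z|\bigr)\,|\nabla\eta_\varepsilon(z)|\,dz
\;\lesssim\;
\|u_\varepsilon\|_{L^\infty}\Bigl( (1+|x|)\,|D\eta|(\R^d)\;+\;\int |z|\,|\nabla\eta_\varepsilon(z)|\,dz\Bigr)
\]
produces a first-moment term $\int |z|\,|\nabla\eta_\varepsilon(z)|\,dz$, which is approximately $\int |z|\,d|D\eta|(z)$ and is \emph{not} controlled by $|D\eta|(\R^d)$; it can be infinite for a general $BV$ kernel. (The same obstruction appears in case (ii), where the first moment of $|\nabla\eta|$ with respect to $L^p$ is not controlled by $\|\nabla\eta\|_{L^p}$.) So the Arzel\`a--Ascoli argument for the \emph{space-time} compactness of $u_\varepsilon\ast\eta_\varepsilon$ does not go through under the stated hypotheses, and this is exactly the point where your route diverges from the paper's.

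The paper circumvents this by never convolving the equation with $\eta$. Instead it establishes equicontinuity of the maps $t\mapsto u_n(t,\cdot)$ valued in a ball of $L^\infty$ (resp.\ $L^q$) equipped with the weak-$\ast$ (resp.\ weak) topology, metrized by testing against a countable dense family $\{\psi_k\}\subset C^\infty_c$. Because each $\psi_k$ has compact support, the unboundedness of $V$ in $x$ never enters: $V$ is bounded on $\mathrm{supp}\,\psi_k$. Arzel\`a--Ascoli then yields $u_n(t,\cdot)\weaks u(t,\cdot)$ for \emph{every} $t$, which gives pointwise convergence of the convolutions $u_n\ast\eta$ at each $(t,x)$, and hence $L^1_{\loc}$ convergence of $V(\cdot,\cdot,u_n\ast\eta_n)$ by dominated convergence. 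If you want to keep your line of reasoning you would need either to assume $V$ bounded, or $\eta$ compactly supported, or a finite first moment on $D\eta$; as written the time-Lipschitz bound is not justified and the proof is incomplete at that step. A secondary, more minor issue: the approximate problems are solved with smooth $\eta_\varepsilon$ but you keep $u_0$ only in $L^\infty$/$L^q$, whereas the existence theory you invoke (and the representation formula along characteristics used for the $L^\infty$ bound) is stated for smooth, compactly supported data, so one should mollify $u_0$ as well, as the paper does.
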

The proof of Theorem~\ref{t:ex} relies on an approximation argument combined with a control on the growth of $u$ 
obtained through the classical methods of characteristics. By the above result, observe that either 
$$
      \essup_{t \in [0, T^\ast[}  \| u (t, \cdot) \|_{L^\infty}= + \infty, \qquad \text{respectively} \qquad 
       \essup_{t \in [0, T^\ast_q[ }  \| u (t, \cdot) \|_{L^q}= + \infty,
$$  
or $T^\ast$, respectively $T^\ast_q$, is not maximal and one can construct a solution defined on a larger time interval.
By inspecting the proof one obtains the following lower bounds on $T^\ast$ and $T^\ast_q$ respectively  
$$
      T^\ast \ge  \sum_{j=1}^\infty \frac{\ln 2}{K_1 + 2^j K_2 \| u_0 \|_{L^\infty (\R^d)}} \qquad \text{and}
      \qquad 
       T^\ast_q \ge \sum_{j=1}^\infty \frac{\ln 2}{K_1 + 2^j K_2 \| u_0 \|_{L^q (\R^d)}},
$$
where $K_1$ and $K_2$ only depend on $L$, $d$, $N$,  $|D \eta| (\R^d)$, and $\| u_0 \|_{L^\infty}$ (respectively, on $L$, $d$, $N$,\break $\| \nabla \eta \|_{L^p (\R^d; \R^N \times \R^d)}$, and $\| u_0 \|_{L^q (\R^d)}$). 
In some cases, the nonlocal equation at the first line of~\eqref{e:nonlocal} possesses some further mechanism preventing blow up and as a consequence one has a global-in-time solution. This occurs for instance in the above-mentioned case of anisotropic kernels in one space dimension, where a maximum principle holds. Note however that, contrary to the case of smooth kernels where one always has global-in-time existence~\cite{CGLM,GianMag,KP}, in general in the rough-kernel case finite-time blow up does indeed occur, and we refer to~\cite[p.4065]{KP} and also to equation~\eqref{e:nonlocal2} below for an explicit example.  We now state our uniqueness result. 
\begin{theorem} \label{t:uni} 
Let $V: \R_+ \times \R^d \times \R^N \to \R^d$  be an $L$-Lipschitz continuous function 
and $T \in ]0, + \infty]$.  Then the following holds. 
\begin{itemize}
\item[(i)]  Assume that $\eta \in BV (\R^d;\R^N)$, $u_0 \in L^\infty (\R^d)$ and that either 
\begin{equation}
\label{e:etaR}
    \lim_{R \to + \infty} R^d \int_{\R^d \setminus B_R (0)} |\eta (x)| dx =0, \quad \text{where $B_R (0): = \{ x \in \R^d: \, |x| \leq  R \}$}, 
\end{equation}
or 
\begin{equation}
\label{e:elleuno}
         u_0 \in L^1 (\R^d)
\end{equation}
holds true. Then, there is at most one distributional solution  
$u \in L^\infty (]0, T[ ; L^\infty(\R^d))$ of the Cauchy problem~\eqref{e:nonlocal}. 
\item[(ii)] Fix $p>1$ and let $q = \sfrac{p}{(p-1)}$. Assume that $\eta \in W^{1, p} (\R^d;\R^N)$, $u_0 \in L^q (\R^d)$
 and either 
\begin{equation}
\label{e:etaRp}
    \lim_{R \to + \infty} R^d \left( \int_{\R^d \setminus B_R (0)} |\eta (x)|^p dx\right)^{1/p} =0 , \quad \text{where $B_R (0): = \{ x \in \R^d: \, |x| \leq  R \}$,}
\end{equation}
or~\eqref{e:elleuno} holds true. Then, there is at most one distributional solution 
$u \in L^\infty (]0, T[ ; L^q(\R^d))$ of the Cauchy problem~\eqref{e:nonlocal}. 
\end{itemize}
\end{theorem}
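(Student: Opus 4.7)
The plan is to reduce uniqueness for the nonlocal Cauchy problem to a Gr\"onwall estimate on the Lagrangian flows of the induced linear vector fields. Given two distributional solutions $u_1,u_2$ of~\eqref{e:nonlocal} sharing the same initial datum $u_0$, I set $b_i(t,x) := V(t,x,(u_i\ast\eta)(t,x))$. Using $\nabla_x(u_i\ast\eta) = u_i\ast D\eta$ together with Young's inequality (pairing $\eta\in BV$ with $u_i\in L^\infty$ in case (i), or $\nabla\eta\in L^p$ with $u_i\in L^q$ in case (ii)), each $b_i$ is bounded and Lipschitz in $x$, with a constant $L_0$ depending only on $L$, $d$, $N$, $|D\eta|(\R^d)$ (resp.\ $\|\nabla\eta\|_{L^p}$), and $\|u_i\|$. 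The classical theory for continuity equations with Lipschitz drift then provides a unique Lagrangian representation $u_i(t,\cdot) = (Y_i(t))_\#u_0$, where $Y_i$ is the bi-Lipschitz flow of $\dot Y = b_i(t,Y)$, $Y_i(0,x)=x$.

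Testing this pushforward identity against $\phi(\xi)=\eta_k(y-\xi)$ for each component $k$, after standard mollification and passage to the limit, yields the central formula
\[
 (w \ast \eta)(s,y) = \int_{\R^d} \bigl[\eta(y-Y_1(s,z)) - \eta(y-Y_2(s,z))\bigr]\, u_0(z)\, dz, \qquad w := u_1 - u_2.
\]
Paired with the bound $|b_1-b_2|\leq L|w\ast\eta|$ and with the translation inequalities $\|\eta(\cdot-a)-\eta(\cdot-b)\|_{L^1}\leq |D\eta|(\R^d)\,|a-b|$ (BV case) or $\|\eta(\cdot-a)-\eta(\cdot-b)\|_{L^p}\leq \|\nabla\eta\|_{L^p}\,|a-b|$ (Sobolev case), this expresses the feedback $b_1-b_2$ in terms of the flow difference $Y_1-Y_2$ and closes the nonlinear loop.

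Under~\eqref{e:elleuno}, I define $G(t) := \int_{\R^d}|Y_1(t,x)-Y_2(t,x)|\,|u_0(x)|\,dx$; this is finite since $|Y_1-Y_2|$ is globally bounded (by $|w\ast\eta|\leq\|w\|_\infty\|\eta\|_1$) and $u_0\in L^1$, and $G(0)=0$. From the pointwise bound $|Y_1-Y_2|(t,x)\leq e^{L_0 t}\int_0^t|b_1-b_2|(s,Y_2(s,x))\,ds$, integrating against $|u_0(x)|\,dx$ and performing the Lagrangian change of variables $y=Y_2(s,x)$ (which turns $|u_0(x)|\,dx$ into $|u_2(s,y)|\,dy$), then applying H\"older and the translation estimate through the key formula above, one arrives at $G(t) \leq C\int_0^t G(s)\,ds$ with $C$ depending on $L$, $L_0$, $\|u_i\|$, and the norm of $\eta$. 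Gr\"onwall forces $G\equiv 0$, hence $Y_1(t,\cdot)=Y_2(t,\cdot)$ on $\{u_0\neq 0\}$, and the pushforward formula yields $u_1=u_2$.

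Under~\eqref{e:etaR} (resp.~\eqref{e:etaRp}) the weight $|u_0|$ need not be integrable, so the global $G$ above may diverge; the plan is to run the same scheme on the localized $G_R(t) := \int_{B_R(0)}|Y_1(t,x)-Y_2(t,x)|\,|u_0(x)|\,dx$. The contribution of $z\in B_R$ is controlled exactly as in the $L^1$ case, while for $|z|>R$ the translation estimate is replaced by the triangle inequality together with the bi-Lipschitz bound $|Y_i(s,z)|\geq c|z|$ at large $|z|$, which reduces the tail to an integral of $|\eta|$ on exterior regions; after Fubini this is bounded by the product of a volume factor $\sim R^d$ and the tail norm $\int_{|\xi|>cR}|\eta|$ (resp.\ its $L^p$ analogue), whose joint smallness is exactly the content of~\eqref{e:etaR} (resp.~\eqref{e:etaRp}). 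An iterative Gr\"onwall scheme over an increasing sequence of radii then closes the bound and forces $Y_1=Y_2$ almost everywhere. The main obstacle will be this tail estimate, which sits on the borderline dictated by~\eqref{e:etaR}/\eqref{e:etaRp}: matching the $R^d$ factor coming from the spatial spread of the flows to the prescribed decay of $\eta$ will require a careful dyadic (or similarly sharp) choice of the iterated radii.
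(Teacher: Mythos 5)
Your overall strategy --- compare the two Lagrangian flows, bound $b_1-b_2$ through the ``central formula'' $(w\ast\eta)(s,y)=\int[\eta(y-Y_1(s,z))-\eta(y-Y_2(s,z))]u_0(z)\,dz$, use the translation estimate $\|\eta(\cdot-a)-\eta(\cdot-b)\|_{L^1}\leq|D\eta|\,|a-b|$ together with the pushforward $|u_0|\,dx\mapsto|u_2(s,\cdot)|\,dy$, and run Gr\"onwall on an integral of $|Y_1-Y_2|\,|u_0|$ --- is exactly the argument in the paper. In the case $u_0\in L^1$ your $G(t)$ is literally the paper's $Q_R$ with trivial cutoff and the argument closes in the same way.

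The difference, and the part where your write-up has a real gap, is the localization device in the non-integrable case. You propose the \emph{hard} cutoff $G_R(t)=\int_{B_R}|Y_1-Y_2||u_0|\,dx$ and then split the inner $z$-integral at $|z|=R$, bounding the ``near'' part as in the $L^1$ case and the tail via a flow-separation estimate plus a factor $R^d\int_{|\xi|>cR}|\eta|$. As stated, the tail estimate fails: if $x$ lies near $\partial B_R$ and $|z|$ is just above $R$, then $|x-z|$ is of order $1$, so $|Y_2(t,x)-Y_i(t,z)|$ is not large and $\eta$ is evaluated near the origin, not in the exterior region $\{|\xi|>cR\}$. You must separate the cutoff radius for $x$ from the splitting radius for $z$ (e.g.\ split $z$ at $|z|>|x|+c_LR$, or at $2R$), but then after Fubini and pushforward the inner part is bounded by $G_{2R}$ rather than $G_R$, and the plain Gr\"onwall no longer closes. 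You are evidently aware of this, since you invoke an ``iterative Gr\"onwall over an increasing sequence of radii'' as the way out. That iteration can indeed be made to work --- one gets $G_R(t)\leq C\int_0^tG_{2R}(s)\,ds+CR^d\int_{|\xi|>cR}|\eta|$, iterates $N$ times, and the factorial in the Picard remainder beats the geometric factor $(2^dC)^N$ while the sum of tails tends to zero as $R\to\infty$ --- but it requires careful bookkeeping that your sketch does not carry out.

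The paper sidesteps this entirely by replacing $\mathbbm{1}_{B_R}$ with the \emph{soft} weight $\chi_R(x)=\exp(-|x|/R)$ (see the definition of $Q_R$ in~\eqref{e:qerre}--\eqref{e:chiR}) and by splitting the inner integral over a set $E(x)=\{|y|\leq|x|+c_LR\}$ that grows with $|x|$. The crucial algebraic property $\chi_R(x)\leq e^{c_L}\chi_R(y)$ for $y\in E(x)$ lets one transfer the weight from $x$ to $y$ before Fubini and pushforward, so that the near contribution is again bounded by the same $Q_R(t)$ and a single Gr\"onwall suffices, while the far contribution lands in $\{|\xi|>R\}$ because $|y-x|\geq c_LR$ is \emph{uniform}. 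Your hard cutoff lacks this comparison property. So the ideas are the right ones, and the $L^1$ branch is correct as you describe it, but in the non-$L^1$ branch you should either switch to the exponential weight or fully write out the dyadic recursion; as it stands, the tail estimate at the matching radius $R$ is simply wrong.
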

Interestingly, the proof of Theorem~\ref{t:uni} boils down to a Gr\"onwall-type argument applied to the functional $Q_R$ defined by equation~\eqref{e:qerre} in \S\ref{ss:puni} below. The use of similar functionals is by now fairly classical in the analysis of equations coming from fluid dynamics or kinetic theory, see for example~\cite{Loeper,MarchioroPulvirenti}, but to the best of our knowledge this is one of the first instances where it is used in the nonlocal conservation laws framework. The proof is quite flexible and can actually be extended up to the case $p=+ \infty$ and $u_0$ a finite measure, thus recovering results from~\cite{GianMag} (see also~\cite{DFFR} for a very recent contribution in the one-dimensional, anisotropic kernels case). 

To state our last result we consider the blow up example in~\cite[p.4065]{KP}, which corresponds to the choices $d=1$, $V(t, x, \xi) = \xi$, $\eta  = \mathbbm{1}_{]-1, 0[}$, and we choose $u_0 = 2 \mathbbm{1}_{]0, \sfrac{1}{2}[}$, so that the Cauchy problem~\eqref{e:nonlocal} boils down to
\begin{equation}
\label{e:nonlocal2}
     \left\{
     \begin{array}{ll}
    \partial_t u + \displaystyle{\partial_x \left[u  \left(\int_{x}^{x+1} u (t, y) dy \right)\right] =0,} \\
    \phantom{xx} \\
     u(0, x) = 2 \mathbbm{1}_{]0, \sfrac{1}{2}[} (x).\\
    \end{array}
     \right.
\end{equation}
The locally bounded solution of~\eqref{e:nonlocal2} is 
\be 
\label{e:formulaex}
      u(t, x) =  \frac{2}{1-2t} \mathbbm{1}_{\{(t, x) : \, t \leq x \leq \sfrac{1}{2}\}},
\eq
hence it is defined up to time $t=\sfrac{1}{2}$, and satisfies 
\be \label{e:blowup}
     u(t, \cdot) \weaks \delta_{x=\sfrac{1}{2}} \quad \text{weakly$^\ast$ in $\mathcal M(\R)$ as $t \uparrow \sfrac{1}{2}$},
\eq
where $\mathcal M(\R)$ denotes the space of finite Radon measures on $\R$ and $\delta_{x=\sfrac{1}{2}}$ the Dirac Delta concentrated at $x =\sfrac{1}{2}$.  Let us now consider the following question:  let $\{ \eta_n\}_{n \in \mathbb N}$ be a sequence of smooth kernels converging to $\mathbbm{1}_{]-1, 0[}$, and consider the corresponding sequence of Cauchy problems
\begin{equation}
\label{e:nonlocal3}
     \left\{
     \begin{array}{ll}
    \partial_t u_n + \partial_x [u_n (u_n \ast \eta_n) ] = 0, \\
     u_n(0, x) = 2 \mathbbm{1}_{]0, \sfrac{1}{2}[} (x). \\
    \end{array}
     \right.
\end{equation}
Global-in-time existence and uniqueness results for the above Cauchy problem are established in several papers, see for instance~\cite{CGLM,GianMag,KP}. Does the solutions sequence $\{ u_n \}_{n \in \mathbb N}$ admit a unique accumulation point? If the answer were positive, we could regard the limit point as a suitable definition of solution of~\eqref{e:nonlocal2} after the blow-up time. However, our last result shows that this is actually not the case, thus ruling out a fairly natural strategy for extending the solution after the blow up time.
\begin{theorem} \label{t:cex}
For every $\alpha \in [0, 1]$ there is a sequence $\{ \eta^\alpha_n \}_{n \in \mathbb N} \subseteq W^{1, \infty} (\R)$ such that
\be \label{e:etaconv}
\eta^\alpha_n \to \mathbbm{1}_{]-1, 0[} \quad \text{in $L^1(\R)$ as $n \to + \infty$}
\eq
and the corresponding sequence $\{ u^n_\alpha\}_{n \in \mathbb N}$ of solutions of~\eqref{e:nonlocal3} satisfies 
\begin{equation}\label{e:limitcex}
u^\alpha_n \weaks 
\left\{ \begin{array}{ll} u & \text{weakly$^\ast$ in $L^\infty(]0, T[\times \R)$ for every $T< \sfrac{1}{2}$,} \\
\nu_\alpha & \text{weakly$^\ast$ in $L^\infty (]\sfrac{1}{2}, +\infty[; \mathcal M (\R))$,}
\end{array}\right.
\end{equation}
as $n \to + \infty$, 
where $u$ is the locally bounded solution of~\eqref{e:nonlocal2} for $0 \leq t < \sfrac{1}{2}$, and
$\nu_\alpha$ is 
 a Dirac Delta concentrated on the line $x = \alpha t+ (1-\alpha)/2$, that is, 
\be \label{e:nualpha}
\nu_\alpha \in L^\infty (]\sfrac{1}{2}, + \infty[ ; \mathcal M(\R))\,,
\qquad
  \nu_\alpha(t):=   \delta_{\alpha t+ (1-\alpha)/2} \,.
\eq 
\end{theorem}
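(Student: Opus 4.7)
The plan is to construct $\eta^\alpha_n$ as a tailored mollification of $\mathbbm{1}_{]-1,0[}$ forcing the pointwise value $\eta^\alpha_n(0)=\alpha$, and then to track the Lagrangian flow of $u^\alpha_n$ and identify the limit as a single particle moving at speed $\alpha$. Fix a nonnegative $\psi\in C^\infty_c(\R)$ with $\operatorname{supp}\psi\subset[-1,1]$, $\int_\R\psi=1$, and $\int_0^1\psi\,dy=\alpha$; set $\psi_n(x):=n\psi(nx)$ and $\eta^\alpha_n:=\psi_n\ast\mathbbm{1}_{]-1,0[}$. A direct computation shows $\eta^\alpha_n\in C^\infty_c(\R)\subset W^{1,\infty}(\R)$, $\eta^\alpha_n(x)=\int_x^{x+1}\psi_n(z)\,dz$, $\eta^\alpha_n(0)=\int_0^\infty\psi_n=\alpha$, $\eta^\alpha_n\to\mathbbm{1}_{]-1,0[}$ in $L^1(\R)$, and $|D\eta^\alpha_n|(\R)\le 2$ uniformly in $n$. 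For the pre-blow-up part of \eqref{e:limitcex}, the uniform $BV$ bound together with the explicit characteristics of \eqref{e:nonlocal2} yields a uniform $L^\infty$ control of $u^\alpha_n$ on $[0,T]\times\R$ for every $T<1/2$, and Theorem~\ref{t:uni} applied to the limit problem forces any weak-$\ast$ accumulation point to coincide with the $u$ in \eqref{e:formulaex}.

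For $t\ge 1/2$ I pass to the Lagrangian picture. Let $X_n$ denote the smooth flow of $u^\alpha_n$ and, using the pushforward $u^\alpha_n(t,x)\,dx=dm$ inherited from $u_0=2\mathbbm{1}_{]0,1/2[}$, set $Z_n(t,m):=X_n(t,m/2)$ for $m\in[0,1]$, so that
\[
\dot Z_n(t,m)=\int_0^1 \eta^\alpha_n\bigl(Z_n(t,m)-Z_n(t,m')\bigr)\,dm',\qquad Z_n(0,m)=m/2.
\]
Introduce the spike width $\ell_n(t):=Z_n(t,1)-Z_n(t,0)$ and the centre of mass $\mu_n(t):=\int_0^1 Z_n(t,m)\,dm$. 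The pre-blow-up convergence and the a priori bound $|\dot\ell_n|+|\dot\mu_n|\le 2$ give $\ell_n(1/2)\to 0$ and $\mu_n(1/2)\to 1/2$. For $n$ large the relevant arguments all lie in the decreasing branch of $\eta^\alpha_n$ near $0$, and a Fubini computation yields
\[
-\dot\ell_n(t)=\int_\R \psi_n(a)\,M_n(t,a)\,da\ge 0,
\]
where $M_n(t,a)$ is the $u^\alpha_n(t,\cdot)$-mass of the shifted interval $[Z_n(t,0)-a,Z_n(t,1)-a]$. Combining the identities $\int_\R M_n(t,a)\,da=\ell_n(t)$ and $M_n(t,a)\ge(1-|a|/\ell_n)_+$ with a two-scale bootstrap I obtain $\dot\ell_n\le -c$ while $\ell_n\ge 1/n$ and $\dot\ell_n\le -c'n\ell_n$ once $\ell_n\le 1/n$; starting from $\ell_n(1/2)\to 0$ this yields $n\,\ell_n(t)\to 0$ for every fixed $t>1/2$.

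To extract the speed $\alpha$ I symmetrise $\dot\mu_n$ in the two mass variables, obtaining
\[
2\,\dot\mu_n(t)=\int_0^1\!\!\int_0^1\!\bigl[\eta^\alpha_n(Z_n(t,m)-Z_n(t,m'))+\eta^\alpha_n(Z_n(t,m')-Z_n(t,m))\bigr]\,dm\,dm'.
\]
Since the arguments lie in $[-\ell_n(t),\ell_n(t)]$ and $\eta^\alpha_n$ has Lipschitz constant of order $n$ near $0$, Taylor expansion together with $\eta^\alpha_n(0)=\alpha$ gives $\dot\mu_n(t)=\alpha+O(n\ell_n(t))$, so the previous paragraph forces $\dot\mu_n(t)\to\alpha$ for each $t>1/2$. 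Integrating from $1/2$, using $\mu_n(1/2)\to 1/2$ and absorbing the short transient via $|\dot\mu_n|\le 1$, yields $\mu_n(t)\to\alpha t+(1-\alpha)/2$ pointwise in $t>1/2$. Coupling this with $\ell_n(t)\to 0$ gives $u^\alpha_n(t,\cdot)\weaks\delta_{\alpha t+(1-\alpha)/2}$ in $\mathcal M(\R)$ for every $t>1/2$, and the uniform mass bound $\|u^\alpha_n(t,\cdot)\|_{\mathcal M(\R)}=1$ upgrades this to the weak-$\ast$ convergence in \eqref{e:limitcex}. The main obstacle is the two-scale contraction of $\ell_n$: monotonicity alone does not certify the quantitative bound $n\ell_n\to 0$ needed to read the value $\eta^\alpha_n(0)=\alpha$ off the centre-of-mass equation, so one must interpolate carefully between the linear regime ($\ell_n\gg 1/n$) and the exponential regime ($\ell_n\ll 1/n$) across the blow-up time.
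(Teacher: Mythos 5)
Your construction of $\eta^\alpha_n$ as $\psi_n\ast\mathbbm 1_{]-1,0[}$ with $\int_0^1\psi=\alpha$ is perfectly sound and is in substance equivalent to the piecewise-linear kernel the paper builds: both are decreasing through the value $\alpha$ at $x=0$, equal to $1$ on a left interval of length $\ge 3/4$, and $0$ to the right of $O(1/n)$. The pre-blow-up convergence is asserted rather than proved (one needs the Arzel\`a--Ascoli step from the existence proof to get pointwise-in-time weak-$\ast$ convergence before Theorem~\ref{t:uni} can identify the limit), but that is the same gap the paper itself elides by reference.

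The real problem is the post-blow-up step. Your strategy is to track the width $\ell_n(t)=Z_n(t,1)-Z_n(t,0)$ and the centre of mass $\mu_n(t)$, show $n\,\ell_n(t)\to 0$ via a two-scale contraction, and then Taylor-expand $\eta^\alpha_n$ around $0$ in the centre-of-mass ODE to read off the speed $\alpha$. This requires a quantitative rate on $\ell_n$, and the key inequality you invoke to obtain it, namely
\[
M_n(t,a)\ \ge\ \Bigl(1-\tfrac{|a|}{\ell_n(t)}\Bigr)_+,
\]
is not justified. It is equivalent to asserting that the mass of $u^\alpha_n(t,\cdot)$ carried by any boundary strip of the support of width $|a|$ is at most $|a|/\ell_n(t)$, i.e.\ that the density inside the moving support never exceeds its average $1/\ell_n(t)$ by more than a bounded factor. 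Nothing forces this: the velocity $\eta^\alpha_n\ast u^\alpha_n$ is decreasing across the support, so mass piles up on the right end $Z_n(t,1)$; the compression rate $-\partial_x v_n=\psi_n\ast u^\alpha_n$ is itself largest where the density is largest (a positive feedback), and the only lower bound the paper obtains on the density is $u^\alpha_n\ge 2$, which is far below $1/\ell_n$ once $\ell_n$ is small. Without that uniform density control the two-scale bootstrap (linear decay for $\ell_n\gg 1/n$, exponential decay for $\ell_n\ll 1/n$) does not close, and you acknowledge this as the ``main obstacle.'' Moreover you have no rate on $\ell_n(\sfrac{1}{2})\to 0$ coming out of the pre-blow-up stage, so the transient across $t=\sfrac12$ is also not controlled.

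The paper's argument avoids all of this and is available to you unchanged with your own kernel. It bounds the \emph{endpoint} velocities: since $\eta^\alpha_n\ge\alpha$ on $[-3/4,0]$ and $\eta^\alpha_n\le\alpha$ on $[0,+\infty[$, and since a monotonicity computation shows $\ell_n(t)\le 3/4$ for all $t$ (your kernel also has $(\eta^\alpha_n)'\le 0$ on the relevant range), the Lagrangian ODE gives $\dot Z_n(t,0)\ge\alpha$ and $\dot Z_n(t,1)\le\alpha$. Combined with $Z_n(\sfrac12,0),Z_n(\sfrac12,1)\to\sfrac12$ (which the paper establishes by comparing $\iint_{E_\ee}u^\alpha_n$ with $\iint_{E_\ee}u$ using the pre-blow-up convergence and the lower bound $u^\alpha_n\ge 2$ on the support), both endpoints are squeezed onto the line $x=\alpha t+(1-\alpha)/2$, with no rate on $\ell_n$ required. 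I recommend replacing the centre-of-mass/two-scale argument with this monotone endpoint estimate.
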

The construction of $\eta_n^\alpha$ is completely explicit (see formulas~\eqref{e:etaalpha} and~\eqref{e:errealpha} in \S\ref{ss:ualphan} below) and the very basic idea can be handwavingly described as follows. The convergence at the first line of~\eqref{e:limitcex} is fairly standard, the most intriguing part is establishing the convergence at the second line. We recall~\eqref{e:blowup} and 
conclude that, at the heuristic level, to extend the limit solution after the blow-up time, one should determine the velocity field in~\eqref{e:nonlocal3}, and hence give a meaning to the convolution $\delta_{x=\sfrac{1}{2}} \ast \mathbbm{1}_{]-1, 0[}$, which basically amounts to single out the value of the $BV$ function $\mathbbm{1}_{]-1, 0[}$ at $x=0$. For every $\alpha \in [0, 1]$, the basic idea underpinning the definition of $\eta^\alpha_n$ is heuristically speaking to select~$\alpha$ as the value attained by the limit $\eta$ at $x=0$, see Figure~\ref{f:2} in \S\ref{ss:ualphan} for a representation. This implies that, in the $n \to + \infty$ limit, the Dirac Delta concentrated at $(t, x)= (\sfrac{1}{2}, \sfrac{1}{2})$ propagates with velocity~$\alpha$, which in turn yields the convergence at the second line of~\eqref{e:limitcex}. 

\subsection*{Outline}The exposition is organized as follows. In \S\ref{s:pr} we recall some preliminary results, in \S\ref{s:ex} and~\S\ref{s:uni} we establish Theorems~\ref{t:ex} and~\ref{t:uni}, respectively. Finally, in \S\ref{s:cex} we provide the proof of Theorem~\ref{t:cex}.

\subsection*{Notation}
For the reader's convenience we conclude the introduction by collecting the main notation used in the present paper. 
We denote by $C(a_1, \dots, a_n)$ a constant only depending on the quantities~$a_1, \dots, a_n$. Its precise value can vary from occurrence to occurrence. 
We also employ the following notation:
\begin{itemize}
\item $\R_+: = [0, + \infty[$
\item $|x|$: the standard Euclidean norm of $x \in \R^d$
\item $B_R(0)$: the ball of radius $R$ and center at $0$ in $\R^d$
\item $|E|$: the Lebesgue measure of the measurable set $E \subseteq \R^d$
\item $\mathcal L^d$: the $d$-dimensional Lebesgue measure
\item $|D \eta| (\R^d)$: the total variation of the function $\eta \in BV (\R^d; \R^N)$, see~\cite{AFP} 
\item $\mathbbm{1}_E$: the characteristic function of the set $E$:
$$
   \mathbbm{1}_E (x) : = 
   \left\{
   \begin{array}{ll}
          1 & x \in E \\
          0 & x \notin E \\
   \end{array}
   \right.
$$
\item $\mathcal M(X)$: the space of finite Radon measures on $X$, see~\cite{AFP}
\item $C_c(X)$: the space of continuous and compactly supported functions on $X$
\item $C([0, T]; w-L^q(\R^d))$: the space of maps $t \mapsto u(t, \cdot)$ which are continuous from $[0, T]$ to~$L^q(\R^d)$ endowed with the weak topology 
\item $C([0, T]; w^\ast-L^\infty(\R^d))$: the space of maps $t \mapsto u(t, \cdot)$ which are continuous from $[0, T]$ to~$L^\infty(\R^d)$ endowed with the weak$^\ast$ topology 
\item $d$: the dimension of the space domain, $x \in \R^d$
\item $L$: the Lipschitz constant of the function $V$ in~\eqref{e:nonlocal}. 
\end{itemize}
\subsection*{Acknowledgments}
M.~Colombo is supported by the Swiss State Secretariat for Education, Research and lnnovation (SERI) under contract number MB22.00034 through the project TENSE.  G.~Crippa is supported by the Swiss National Science Foundation through the project 212573 FLUTURA (Fluids, Turbulence, Advection) and by the SPP 2410 ``Hyperbolic Balance Laws in Fluid Mechanics: Complexity, Scales, Randomness (CoScaRa)'' funded by the Deutsche Forschungsgemeinschaft (DFG, German Research Foundation) through the project 200021E\_217527 funded by the Swiss National Science Foundation. L.V. Spinolo is a  member of the GNAMPA group of INDAM and of the PRIN 
Project 20204NT8W4 (PI Stefano Bianchini), of the PRIN Project 2022YXWSLR (PI Paolo Antonelli), 
of the PRIN 2022 PNRR Project P2022XJ9SX (PI Roberta Bianchini) and of the CNR FOE 2022 Project STRIVE.

\section{Preliminary results} \label{s:pr}
First, we recall the definition of distributional solution of the Cauchy problem~\eqref{e:nonlocal}. 
\begin{definition}
Fix $T>0$, $u_0 \in L^1_{\mathrm{loc}} (\R^d)$ and assume that $u \in L^1_{\mathrm{loc}} (]0, T[ \times \R^d)$, $\eta \in L^1 _{\mathrm{loc}} (\R^d; \R^N)$ are such that $[u \ast \eta] \in L^\infty (]0, T[\times \R^d; \R^N)$. We term $u$ a distributional solution of the Cauchy problem~\eqref{e:nonlocal} if for every $\varphi \in C^\infty_c ([0, T[ \times \R^d)$ we have 
\be \label{e:dsol}
     \int_0^T \! \!  \int_{\R^d} u[ \partial_t \varphi + V(\cdot, \cdot, u\ast \eta) \cdot \nabla \varphi ] dx dt + \int_{\R^d} u_0 \varphi(0, \cdot) dx = 0. 
\eq
We also say that $u$ is a distributional solution of the equation at the first line of~\eqref{e:nonlocal} if \eqref{e:dsol} holds for every $\varphi \in C^\infty_c (]0, T[ \times \R^d)$ (so that the second integral in \eqref{e:dsol} does not appear).
\end{definition}
Next, we consider the linear continuity equation 
\be \label{e:conteq}
     \partial_t u + \mathrm{div} [bu ] =0,
\eq
where $b: \R_+ \times \R^d \to \R^d$ is a given vector field, and recall the following property (the proof is a slight extension 
of the proof of~\cite[Lemma 1.3.3]{Dafermos}). Fix $T>0$ and $q \in [ 1, + \infty]$
and assume that $b \in L^\infty_{\mathrm{loc}} (]0, T[ \times \R^d; \R^d)$ and $u \in 
L^\infty (]0, T[; L^q (\R^d))$ satisfy~\eqref{e:conteq} in the sense of distributions on $]0, T[ \times \R^d$. Then $u$ has a representative (namely, can be re-defined on a  $\mathcal{L}^{1+d}$-negligible set) so that the map $t \mapsto u(t, \cdot)$ is continuous from $[0, T]$ to $L^q(\R^d)$ endowed with the weak topology (or the weak$^\ast$ topology, if $q= + \infty$), that is,
$$
u \in C([0, T]; w-L^q(\R^d)) 
\qquad \text{ resp.} \qquad
u \in C([0, T]; w^\ast-L^\infty(\R^d))\,.
$$
\begin{remark} \label{r:dafermos}
Fix $T>0$, $p \in [1,+\infty]$, $q = \sfrac{p}{(p-1)}$,  and $V: \, ]0, T[  \times \R^d \times \R^N \to \R^d$ an $L$-Lipschitz continuous function. Assume that 
$$
    \eta \in L^p(\R^d; \R^N), \qquad u \in L^\infty (]0, T[ ; L^q( \R^d))
$$
satisfy the equation at the first line of~\eqref{e:nonlocal} in the sense of distributions. 
Then $V(\cdot, \cdot, u\ast \eta) \in L^\infty_{\loc} (]0, T[ \times \R^d; \R^d)$ and hence $u$ admits a representative (namely, can be re-defined on a  $\mathcal{L}^{1+d}$-negligible set) such that
$$
u \in C([0, T]; w-L^q(\R^d)) \quad \text{for $1 \leq q < \infty$,}
\quad \text{ resp.} \quad
u \in C([0, T]; w^\ast-L^\infty(\R^d)) \quad \text{for $q = \infty$.}
$$
In the following we will always tacitly identify $u$ with its continuous representative.
\end{remark}
In the following we need the following elementary result. 
\begin{lemma} \label{l:Lipcont}
Let $T>0$,  $p \in [1, + \infty]$, $q = \sfrac{p}{(p-1)}$, and  $V: \, ]0, T[  \times \R^d \times \R^N \to \R^d$ an $L$-Lipschitz continuous function.  
Consider either 
\be \label{e:wipeta}
   \eta \in W^{1, p} (\R^d; \R^N), \qquad
 u \in C([0, T]; w-L^q(\R^d))
\eq
 or 
\be \label{e:bveta} 
    \eta \in BV (\R^d; \R^N), \qquad u \in C([0, T]; w^\ast-L^\infty(\R^d)) 
\eq
and set 
\begin{equation}
\label{e:b}
        b(t, x) : = V(t, x, [u \ast \eta ](t, x)).
\end{equation}
Then the function $b: [0, T] \times \R^d \to \R^d$ is continuous, and it is Lipschitz continuous with respect to the $x$ variable, uniformly with respect to the $t$ variable. More precisely, 
\be \label{e:lipb2}
    \|\nabla_x b\|_{L^\infty (]0, T[ \times \R^d)} \leq L + L \| \nabla \eta \|_{L^p (\R^d; \R^{N\times d})}  \| u  \|_{L^\infty (]0, T[; L^q( \R^d))}, 
\eq
under~\eqref{e:wipeta}, and 
\be \label{e:lipb}
    \|\nabla_x b\|_{L^\infty (]0, T[ \times \R^d)} \leq L + L | D \eta |(\R^d)  \| u  \|_{L^\infty (]0, T[ \times \R^d)}, 
\end{equation}
under~\eqref{e:bveta}. 
\end{lemma}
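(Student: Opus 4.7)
The plan is to reduce everything to properties of the convolution $(u\ast\eta)(t,x)$, then transfer them to $b=V(t,x,u\ast\eta)$ using the $L$-Lipschitz assumption on $V$. Concretely, since
\[
|b(t,x) - b(s,y)| \le L\bigl(|t-s| + |x-y| + |(u\ast\eta)(t,x) - (u\ast\eta)(s,y)|\bigr),
\]
both the spatial Lipschitz estimate and the continuity statement for $b$ follow once we establish the corresponding estimates for $u\ast\eta$.

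For the spatial Lipschitz estimate of $u\ast\eta$, I would write
\[
(u\ast\eta)(t,x+h) - (u\ast\eta)(t,x) = \int_{\R^d} u(t,z)\bigl[\eta(x+h-z) - \eta(x-z)\bigr]\,dz
\]
and bound the inner difference in the appropriate norm. Under \eqref{e:wipeta}, I would use $\|\tau_h\eta - \eta\|_{L^p(\R^d)} \le |h|\,\|\nabla\eta\|_{L^p(\R^d;\R^{N\times d})}$ and Hölder's inequality with exponents $p,q$, yielding
\[
|(u\ast\eta)(t,x+h) - (u\ast\eta)(t,x)| \le |h|\, \|u\|_{L^\infty(]0,T[;L^q)}\,\|\nabla\eta\|_{L^p}.
\]
Under \eqref{e:bveta}, I would instead use the classical translation estimate for $BV$ functions $\|\tau_h\eta - \eta\|_{L^1(\R^d)} \le |h|\,|D\eta|(\R^d)$, paired with the $L^\infty$ bound on $u$. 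Multiplying by $L$ and adding the $L|h|$ contribution from the $x$-dependence of $V$ gives \eqref{e:lipb2} and \eqref{e:lipb} respectively.

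For the continuity of $b$ jointly in $(t,x)$, the spatial Lipschitz estimate already gives uniform (in $t$) continuity in $x$, so it suffices to show, for each fixed $x_0 \in \R^d$, that $t \mapsto (u\ast\eta)(t,x_0)$ is continuous on $[0,T]$. For this I would observe that $(u\ast\eta)(t,x_0) = \int u(t,z)\,\eta(x_0-z)\,dz$ is the pairing of $u(t,\cdot)$ with the fixed function $z \mapsto \eta(x_0-z)$, which lies in $L^p(\R^d)$ in case \eqref{e:wipeta} (so that weak continuity of $u(t,\cdot)$ in $L^q$ gives the claim) and in $L^1(\R^d)$ in case \eqref{e:bveta} (so that weak$^\ast$ continuity of $u(t,\cdot)$ in $L^\infty$ gives the claim). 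A standard triangle-inequality argument then yields joint continuity:
\[
|b(t,x) - b(t_0,x_0)| \le |b(t,x) - b(t,x_0)| + |b(t,x_0) - b(t_0,x_0)|,
\]
with the first term controlled by the uniform spatial Lipschitz estimate and the second term going to zero as $t \to t_0$ by the previous argument together with $L$-Lipschitzianity of $V$.

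The only mild subtlety I anticipate is ensuring that $z \mapsto \eta(x_0 - z)$ genuinely lies in $L^1(\R^d)$ in the $BV$ case — this requires $\eta \in L^1(\R^d)$, which is implicit in the setting of the paper since otherwise the convolution $u\ast\eta$ would not make pointwise sense when paired with $u \in L^\infty$. Everything else is routine bookkeeping with the translation estimates and Hölder's inequality.
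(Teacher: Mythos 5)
Your proposal is correct and takes essentially the same route as the paper: the spatial Lipschitz bound for $b$ is reduced to a Lipschitz bound on $u\ast\eta$ coming from the regularity of $\eta$ tested against the $L^q$ (resp.\ $L^\infty$) bound on $u$, and the time continuity is obtained by pairing the weak (resp.\ weak$^\ast$) continuity of $t\mapsto u(t,\cdot)$ with the fixed $L^p$ (resp.\ $L^1$) function $\eta(x-\cdot)$. The only cosmetic difference is that the paper differentiates $u\ast\eta$ directly via the chain rule, whereas you use the translation estimates $\|\tau_h\eta-\eta\|_{L^p}\le|h|\,\|\nabla\eta\|_{L^p}$ and $\|\tau_h\eta-\eta\|_{L^1}\le|h|\,|D\eta|(\R^d)$; this is arguably a touch cleaner since $V$ is only assumed Lipschitz, not $C^1$.
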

Note  that \eqref{e:lipb} provides in particular an $L^\infty$ bound for $\div b$, which we will repeatetly use in the rest of this paper.
\begin{proof}
By the chain rule, for any $i,j=1,\ldots, d$ we have
\be \label{e:divb}
    \partial_{x_i}  b^j  = \partial_{x_i}  V^j +  \nabla_\xi V^j \cdot \partial_{x_i}[ \eta \ast u] ,  
\eq
where we denote by $\xi \in \R^N$ the third independent variable in the argument of $V$. Since 
$$
     \| \partial_{x_i} [ \eta \ast u] \|_{L^\infty} \leq\| \nabla \eta \|_{L^p}  \| u  \|_{L^\infty (]0, T[; L^q( \R^d))}, \quad 
\| \partial_{x_i} [ \eta \ast u] \|_{L^\infty } \leq |D \eta| (\R^d)  \| u  \|_{L^\infty}
$$
under~\eqref{e:wipeta} and~\eqref{e:bveta}, respectively, by using~\eqref{e:divb} we arrive at~\eqref{e:lipb2} and~\eqref{e:lipb}. Owing to the Lipschitz continuity of $V$, in order to conclude the proof of the lemma it suffices to establish the continuity in time of the convolution term $ [u \ast \eta ](t, x)$.
To this end, it suffices to observe that, for any given $(t,x) \in [0, T] \times \R^d$, we have
\begin{equation*}\begin{split}
    \lim_{s\to t} \big[ u \ast \eta(s,x)- u \ast \eta(t,x) \big] =    \lim_{s\to t}\int_{\R^d} [ u(s, y)  - u(t, y)]  \eta(x - y) dy = 0,
        \end{split}
\end{equation*}
by the weak continuity and the weak$^\ast$ continuity of $t \mapsto u(t, \cdot)$ tested againts $\eta(x - y)$,  under assumption~\eqref{e:wipeta} and~\eqref{e:bveta}, respectively.
\end{proof}
Under the same assumptions as in Lemma~\ref{l:Lipcont}, we now consider the characteristic line $X(t, x)$ defined by solving the Cauchy problem 
\be \label{e:21}
\left\{
\begin{array}{ll}
\displaystyle{\frac{d X}{dt}(t,x) = b(t,X) 
} 
\\ \phantom{c}\\
X(0, x) = x, \\
\end{array}
\right.
\eq
which is well-posed owing to the Cauchy-Lipschitz-Picard–Lindel\"of Theorem for ODEs combined with Lemma~\ref{l:Lipcont} above. This also yields the well-posedness of the Cauchy problem for the continuity equation~\eqref{e:conteq}. More precisely, for every $u_0 \in L^q (\R^d)$, there is a unique distributional solution $u \in L^\infty(]0, T[; L^q(\R^d))$ of the Cauchy problem~\eqref{e:nonlocal} and it verifies $u(t, \cdot) = X(t, \cdot)_\# u_0$, namely 
\begin{equation}
\label{e:pushf2}
\int_{\mathbb{R}^d} \varphi(x)u(t, x) \, dx = \int_{\mathbb{R}^d} \varphi(X(t,z))u_0(z) \, dz, 
\quad \text{for every $\varphi \in C^0_c (\R^d)$}
\end{equation}
and 
\begin{equation}
\label{e:repfor}
       u(t, X(t, x)) = u_0 (x) \exp \left( \int_0^t   \div b (s,  X(s, x)) ds  \right) \quad \text{for a.e. $(t, x) \in ]0, T[ \times \R^d$}.
\end{equation}
(see for instance~\cite[Section~2]{HW}). Finally, such solution is \emph{renormalized} and in particular it satisfies
$\partial_ t |u| + \div [b|u|] =0$ in the distributional sense. This implies that the unique solution of the continuity equation with initial datum $ |u_0|$ satisfies $X(t, \cdot)_\# |u_0| = \big| X(t, \cdot)_\# u_0 \big| = |u(t,\cdot)|$, and so we have 
\begin{equation}
\label{e:pushf}
\int_{\mathbb{R}^d} \varphi(x)|u(t, x)| \, dx = \int_{\mathbb{R}^d} \varphi(X(t,z))|u_0(z)| \, dz, 
\quad \text{for every $\varphi \in C^0_c (\R^d)$.}
\end{equation}
\section{Proof of Theorem~\ref{t:ex}} \label{s:ex}
The exposition is organized as follows: in \S\ref{ss:exi} we provide the proof of item~(i), and in \S\ref{ss:exii} we provide the proof of item~(ii). 
\subsection{Proof of Theorem~\ref{t:ex}, item (i)} \label{ss:exi}
In the case of a smooth convolution kernel $\eta$, existence and uniqueness results for the Cauchy problem~\eqref{e:nonlocal} are established for instance in~\cite{GianMag}. See also~\cite{CLM,KPS}. In the proof of the theorem we need the following preliminary lemma.
\begin{lemma} \label{l:exreg}
Let $p \in [1,+\infty]$ and $q = \sfrac{p}{(p-1)}$. Assume that $\eta \in C^\infty \cap W^{1, p} (\R^d;\R^N)$ and that $u_0 \in C^\infty_c (\R^d)$. Then there are  $K_1 = C(L)$, $K_2 = C(L, d, N) \| \nabla \eta \|_{L^p (\R^d;\R^{N \times d})}$ such that the solution of the Cauchy problem~\eqref{e:nonlocal}  satisfies
$$
     \| u \|_{L^\infty(]0,  T_q[ ; L^q(\R^d))} \leq 2 \| u_0 \|_{L^q(\R^d)} ,
$$
provided 
\be \label{e:T}
     T_q = \frac{\ln 2}{K_1 + 2 K_2 \| u_0 \|_{L^q (\R^d)}}.
\eq
\end{lemma}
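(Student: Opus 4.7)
The plan is to combine the flow-based representation formulas recalled at the end of \S\ref{s:pr} with the Lipschitz bound of Lemma~\ref{l:Lipcont}, and close a bootstrap inequality on $M(t):=\|u\|_{L^\infty(]0,t[;L^q(\R^d))}$. Since $\eta\in C^\infty$ and $u_0\in C^\infty_c$, the results of~\cite{GianMag} guarantee the existence of a global smooth solution, so every manipulation below is justified and $t\mapsto M(t)$ is continuous with $M(0)=\|u_0\|_{L^q}$.

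First I would derive the basic $L^q$ estimate. For $1\le q<\infty$, I plug $\varphi=|u(t,\cdot)|^{q-1}$ into~\eqref{e:pushf} and use~\eqref{e:repfor} to get
\[
\|u(t,\cdot)\|_{L^q}^q=\int_{\R^d}|u_0(z)|^q\exp\!\left((q-1)\int_0^t\div b(s,X(s,z))\,ds\right)dz,
\]
so that $\|u(t,\cdot)\|_{L^q}\le\|u_0\|_{L^q}\exp\bigl(t\,\|\div b\|_{L^\infty([0,t]\times\R^d)}\bigr)$. For $q=\infty$, the same inequality follows directly from~\eqref{e:repfor} by taking the supremum over $z$. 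Next I use Lemma~\ref{l:Lipcont}, equation~\eqref{e:lipb2}, together with $\|\div b\|_\infty\le d\,\|\nabla_x b\|_\infty$, to bound
\[
\|\div b\|_{L^\infty([0,t]\times\R^d)}\le dL+dL\,\|\nabla\eta\|_{L^p(\R^d;\R^{N\times d})}\,M(t).
\]
Setting $K_1:=dL$ and $K_2:=dL\,\|\nabla\eta\|_{L^p(\R^d;\R^{N\times d})}$, which match the form prescribed in the statement, the two steps combine into the self-referential bound
\[
M(t)\le\|u_0\|_{L^q}\exp\bigl(t\,(K_1+K_2 M(t))\bigr).
\]

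Finally I close the estimate by a continuity/bootstrap argument. Define $t^{\ast}:=\sup\{t\in[0,T_q]:M(t)\le 2\|u_0\|_{L^q}\}$, with $T_q$ as in~\eqref{e:T}. Since $M(0)=\|u_0\|_{L^q}<2\|u_0\|_{L^q}$ (the case $u_0\equiv 0$ being trivial), continuity of $M$ gives $t^{\ast}>0$. Assume by contradiction that $t^{\ast}<T_q$: at $t=t^{\ast}$ one has $M(t^{\ast})=2\|u_0\|_{L^q}$ on one hand, while inserting the bound $M(t^{\ast})\le 2\|u_0\|_{L^q}$ into the self-referential inequality yields
\[
M(t^{\ast})\le\|u_0\|_{L^q}\exp\bigl(t^{\ast}(K_1+2K_2\|u_0\|_{L^q})\bigr)<\|u_0\|_{L^q}\,e^{\ln 2}=2\|u_0\|_{L^q},
\]
a contradiction. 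Hence $t^{\ast}=T_q$ and the conclusion of the lemma follows.

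The main obstacle is not the manipulation of the representation formulas, which is essentially bookkeeping, but rather the fact that the bound in Step~3 is implicit in $M(t)$: one cannot invoke a Grönwall lemma directly. The bootstrap above is the natural way to invert this implicit dependence, and it is precisely where the smoothness of the approximating data is used, so that the function $t\mapsto M(t)$ is actually continuous (not merely lower semicontinuous) and the strict inequality in the contradiction step is meaningful.
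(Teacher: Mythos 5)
Your proof is correct and follows essentially the same strategy as the paper's: express the velocity field via the convolution, bound $\|\div b\|_{L^\infty}$ by $K_1 + K_2\,\|u\|_{L^\infty(]0,t[;L^q)}$ using Lemma~\ref{l:Lipcont}, obtain the exponential $L^q$ estimate, and close the argument by a continuation/contradiction bootstrap with exactly the paper's choice of $T_q$. The one technical divergence is that you derive the $L^q$ estimate by testing the pushforward identity~\eqref{e:pushf} with $\varphi=|u(t,\cdot)|^{q-1}$ and substituting the flow representation~\eqref{e:repfor}, while the paper applies the renormalization identity $\partial_t\beta(u)+\div[b\,\beta(u)]+\div b\,[\beta'(u)u-\beta(u)]=0$ with $\beta(u)=|u|^q$, integrates in space-time and invokes Gr\"onwall; these two computations are equivalent in substance (both hinge on~\eqref{e:repfor} and the $L^\infty$ bound on $\div b$) and give the same inequality $\|u(t,\cdot)\|_{L^q}\le\|u_0\|_{L^q}\exp\bigl(t\,\|\div b\|_{L^\infty}\bigr)$. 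Your use of $|u(t,\cdot)|^{q-1}$ as a test function in~\eqref{e:pushf} is admissible precisely because $u(t,\cdot)$ is continuous and compactly supported under the smoothness hypotheses on $\eta$ and $u_0$, which is the same point the paper records before setting up its contradiction argument.
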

\begin{proof}
By~\cite[eq. (2.2)]{GianMag}, the solution of~\eqref{e:nonlocal} satisfies~\eqref{e:repfor}, provided $b$ is as in~\eqref{e:b}. This implies in particular that $u \in C^0 (\R_+ \times \R^d)$, that $u(t, \cdot)$ is compactly supported for every $t\ge 0$ and that the map {$t \mapsto 
\| u(t, \cdot)\|_{L^q (\R^d)}$} is continuous. We now set 
   \be \label{e:tq}
    t_q : = \sup \{ t \in [0,  T_q]: \| u (s, \cdot) \|_{L^q (\R^d)} \leq 2 \| u_0 \|_{L^q (\R^d)}  \; \text{for every $s \in [0, t]$} \},
\eq
where $T_q$ is defined as in \eqref{e:T} and the constants $K_1$ and $K_2$ are to be chosen later.
We now assume by contradiction that $t_q< T_q$ (so that, by continuity, $ \| u (t_q, \cdot) \|_{L^q (\R^d)}= 2 \| u_0 \|_{L^q (\R^d)}$) and reach a contradiction, thereby showing that  $t_q=T_q$ and hence establishing the statement of the lemma.


We fix $\beta \in C^1(\R)$ and by multiplying the equation at the first line of~\eqref{e:nonlocal} times $\beta'(u)$ we obtain (renormalization formula)
$$
    \partial_t [\beta(u) ]+ \div  [b \beta (u) ] + \div  b \, [\beta'(u) u - \beta (u)] =0, \qquad b(t, x) : = V(t, x, [u \ast \eta](t, x) ).
$$
By applying the above equation with $\beta(u) = |u|^q$, $1 \leq q < \infty$, and space-time integrating we arrive at 
$$
     \int_{\R^d} |u(t_q, x)|^q dx ds \leq  \| u_0 \|^q_{L^q (\R^d)} + (q-1)  \| \div b \|_{L^\infty (]0, t_q[ \times \R^d)} 
\int_0^{t_q} \int_{\R^d} |u(s, x)|^q dx ds, 
$$
which owing to Gr\"onwall's Lemma yields 
\be \label{e:gron}
\begin{split}
     \| u(t_q, \cdot) \|_{L^q (\R^d)} & \leq \| u_0 \|_{L^q (\R^d)}  \exp \left( \frac{q-1}{q} \, t_q \| \div b \|_{L^\infty (]0, t_q[ \times \R^d)}  \right) \\
     & \leq \| u_0 \|_{L^q (\R^d)}  \exp \left( t_q \| \div b \|_{L^\infty (]0, t_q[ \times \R^d)}  \right) .
\end{split}
\eq
Note that by directly estimating~\eqref{e:repfor} we conclude that the bound~\eqref{e:gron} holds also in the case $q=\infty$. 
By~\eqref{e:lipb} we conclude that 
\begin{equation}\label{e:divb2} \begin{split} 
     \| \div b(s, \cdot) \|_{L^\infty (\R^d)} & \stackrel{\eqref{e:divb}}{\leq} \underbrace{ C(L)}_{:=K_1} + \underbrace{C(L, d, N) \| \nabla \eta \|_{L^p (\R^d;\R^{N \times d})}}_{: = K_2} \| u(s, \cdot) \|_{L^q (\R^d)}
    \\ &  \stackrel{\eqref{e:tq}}{\leq}K_1 + 2 K_2 \| u_0 \|_{L^q (\R^d)}, \qquad \text{for every $s \leq t_q$.}
\end{split}
\end{equation}
 By plugging the above formula into~\eqref{e:gron} we get 
$$
    \| u(t_q, \cdot) \|_{L^q (\R^d)} \leq \| u_0 \|_{L^q (\R^d)} \exp (t_q [K_1 + 2 K_2 \| u_0 \|_{L^q (\R^d)}] )
    \stackrel{t_q < T_q, \eqref{e:T}}{<} 2 \| u_0 \|_{L^q (\R^d)},
$$
which contradicts the equality $\| u (t_q, \cdot) \|_{L^q (\R^d)} = 2 \| u_0 \|_{L^q (\R^d)}$. 
\end{proof}
We are now ready to provide the proof of item (i) of Theorem~\ref{t:ex}. 
\begin{proof}[Proof of Theorem~\ref{t:ex}, item (i)]
We apply Lemma~\ref{l:exreg} with $q=+ \infty$ and fix $T < T_\infty$. Next, we proceed according to the following steps. \\
{\sc Step 1:} we consider two sequences $\{ u_{0n} \}_{n \in \mathbb N}$, $\{ \eta_n \}_{n \in \mathbb N}$ such that  
\be \label{e:u0n}
     u_{0n} \weaks u_0 \quad \text{weakly$^\ast$ in $L^\infty (\R^d)$}, \quad \| u_{0n} \|_{L^\infty (\R^d)} \leq \| u_{0} \|_{L^\infty (\R^d)}, \quad  u_{0n} \in C^\infty_c (\R^d)
\eq 
and 
\be \label{e:etan}
    \eta_n \to \eta  \quad \text{strongly in $L^1 (\R^d; \R^N)$}, \quad \| \nabla \eta_n \|_{L^1 (\R^d; \R^{N\times d})} \leq |D \eta| (\R^d), \quad  \eta_{n} \in C^\infty_c (\R^d).
\eq
We term $\{ u_n \}_{n \in \mathbb N}$ the solution of~\eqref{e:nonlocal} with convolution kernel $\eta_n$ and initial datum $u_{0n}$. By applying Lemma~\ref{l:exreg} we get 
\be \label{e:linfty}
    \| u_n \|_{L^\infty (]0, T[ \times \R^d)} \leq 2 \| u_{0n} \|_{L^\infty(\R^d)} \stackrel{\eqref{e:u0n}}{\leq} 2 \| u_{0} \|_{L^\infty(\R^d)}.
\eq
Owing to~\eqref{e:linfty}, up to subsequences 
\be \label{e:weaks}
    u_n \weaks u \, \quad \text{weakly$^\ast$ in $L^\infty (]0, T[ \times \R^d)$}. 
\eq
Assume for a moment that, as we will show in {\sc Step 2} below, we can  find a representative of the function  $u \in L^\infty (]0, T[ \times \R^d)$ (which a-priori is well-defined only up to negligible sets) such that 
\be \label{e:pointwise}
    u_n (t, \cdot) \weaks u(t, \cdot) \quad \text{weakly$^\ast$ in $L^\infty ( \R^d)$ for every $t \in ]0, T[$}. 
\eq
We deduce from \eqref{e:pointwise} that
\be \label{e:strong}
      V(\cdot, \cdot, u_n \ast \eta_n) \to  V(\cdot, \cdot, u \ast \eta) \; \quad \text{strongly in $L^1_{\mathrm{loc}}(]0, T[ \times \R^d; \R^d)$}.
\eq
Indeed, for every $\Omega \subseteq \R^d$ measurable and bounded we have 
\begin{equation} \label{e:Vl1}
\begin{split}
       \| V(\cdot, \cdot, u_n \ast \eta_n) & -  V(\cdot, \cdot, u \ast \eta) \|_{L^1 (]0, T[ \times \Omega; \R^d)} 
        \leq L \|u_n \ast \eta_n - u \ast \eta \|_{L^1(]0, T[ \times \Omega; \R^N)} \\
        & \leq  L{ \|u_n \ast \eta_n - u_n \ast \eta \|_{L^1(]0, T[ \times \Omega; \R^N)}} + 
        L {\|u_n \ast \eta - u \ast \eta \|_{L^1(]0, T[ \times \Omega; \R^N)}}. 
\end{split}
\end{equation}
To control the first term at the right-hand side of the above expression we point out that for a.e.~$(t, x) \in ]0, T[ \times \Omega$ we have 
$$
     |u_n \ast \eta_n - u_n \ast \eta| (t, x) = \left| \int_{\R^d} u_n (t, x-y) [\eta_n - \eta ] (y) \right| \stackrel{\eqref{e:linfty}}{\leq} 2 \| u_0 \|_{L^\infty (\R^d)} \| \eta_n - \eta \|_{L^1 (\R^d; \R^N)}
     \stackrel{\eqref{e:etan}}{\to} 0 .
$$
To control the second term at the right-hand side of~\eqref{e:Vl1}, we point out that by \eqref{e:pointwise} for every $(t, x) \in ]0, T[ \times \Omega$ we have 
$$
    | u_n \ast \eta - u \ast \eta| (t, x) = \left| \int_{\R^d} [u_n - u] (t, y)  \eta (x-y)  \right|    \stackrel{\eqref{e:pointwise}}{\to} 0 . 
$$
By Lebesgue's Dominated Convergence Theorem we get that both terms in the right-hand side of~\eqref{e:Vl1} vanish in $n \to + \infty$ limit. 
With \eqref{e:strong} in place, we can then pass to the limit in~\eqref{e:dsol} and get that $u \in L^\infty (]0, T[ \times \R^d)$ is a distributional solution of~\eqref{e:nonlocal}. 
 \\
{\sc Step 2:} we are left to establish~\eqref{e:pointwise}. The argument relies on fairly standard techniques (see for instance~\cite[Chapter 3]{Brezis} and~\cite[Lemma 1.3.3]{Dafermos}) and we provide it for the sake of completeness. We recall~\eqref{e:linfty} and term $B$ the closed ball of radius $C(T) \| u_0 \|_{L^\infty(\R^d)}$ and center at the origin in $L^\infty (\R^d)$, and we endow it with the weak-$^\ast$ topology. We fix a countable family 
$\{ \psi_k \}_{k \in \mathbb N} \subseteq C^\infty_c (\R^d)$ dense in the unit ball of $L^1 (\R^d)$; owing to the proof of ~\cite[Theorem 3.28]{Brezis} the function 
$$
    d(w, v) : = \sum_{k \in \mathbb N} \frac{1}{2^k } \left| \int_{\R^d} [w-v] (x) \psi_k (x) dx \right|
$$
is a distance metrizing the weak$^\ast$ topology on $B$.  We introduce the sequence of functions
\be \label{e:Fn}
    F_n: [0, T] \to B, \qquad F_n (t) = u_n (t, \cdot), 
\eq
where $u_n$ is the same as in~\eqref{e:weaks}. We now verify that the sequence $\{ F_n \}_{n \in \mathbb N}$ is equi-continuous with respect to the distance $d$. We fix $\delta>0$ and choose $k^\ast \in \mathbb N$ such that for every $n \in \mathbb{N}$ it holds
\begin{equation}\label{e:d21}
\begin{split}
      \sum_{k=k^\ast}^{+\infty} \frac{1}{2^k } \left| \int_{\R^d} [u_n (t_1, x) - u_n (t_2, x) ] \psi_k (x) dx \right| 
     \stackrel{\eqref{e:linfty}}{\leq} \sum_{k=k^\ast}^{+\infty} \frac{4}{2^k } \| u_0 \|_{L^\infty (\R^d) } \underbrace{\| \psi_k \|_{L^1 (\R^d) }}_{\leq 1} 
     \leq \frac{\delta}{2}. 
\end{split}
\end{equation}
By using the equation for $u_n$ and integrating by parts in space, recalling that $\psi_k \in C^\infty_c(\mathbb R^d)$, we get
\begin{equation} \label{e:d22}
\begin{split}
         \sum_{k=1}^{k^\ast-1} & \frac{1}{2^k } \left| \int_{\R^d} [u_n (t_1, x) - u_n (t_2, x) ]  \psi_k (x) dx \right| =
          \sum_{k=1}^{k^\ast-1}\frac{1}{2^k } \left| \int_{\R^d} \psi_k (x) \int_{t_2}^{t_1} \frac{\partial u_n}{\partial t} (s, x) ds dx \right| \\
           & 
           {=}   \sum_{k=1}^{k^\ast-1} \frac{1}{2^k } \left| \int_{t_2}^{t_1} \int_{\R^d} \nabla \psi_k (x) \cdot  V(s, x, u_n \ast \eta_n)u_n (s, x) dx ds \right| \\
          &
          \stackrel{\eqref{e:linfty}}{\leq}2 [M+ 2 L  \| u_0 \|_{L^\infty (\R^d)}]  \| u_0 \|_{L^\infty (\R^d)} |t_2 - t_1 | \sum_{k=1}^{k^\ast-1} \frac{1}{2^k } \| \nabla \psi_k \|_{L^1 (\R^d; \R^d)} \leq \frac{\delta}{2}
\end{split}
\end{equation}
provided $|t_2-t_1|$ is sufficiently small. In the previous expression we have used the shorthand notation
$$
    M: = \max \left\{ V(t, x, 0):  \; t \in [0, T], \; x \in \bigcup_{k=1}^{k^\ast - 1} \mathrm{supp} \, \psi_k    \right\}.
$$
We then have 
$$
    d(F_n(t_1), F_n (t_2) )=    \sum_{k=1}^{+\infty} \frac{1}{2^k } \left| \int_{\R^d} [u_n (t_1, x) - u_n (t_2, x) ] \psi_k (x) dx \right|  
    \stackrel{\eqref{e:d21},\eqref{e:d22}}{\leq}\! \! \! \delta \qquad \text{for every $n \in \mathbb N$} 
$$
provided $|t_1-t_2|$ is sufficiently small, that is, the sequence $\{ F_n \}_{n \in \mathbb N}$ is equi-continuous. We apply the Arzel\`a-Ascoli Theorem and conclude that, up to subsequences, $F_n \to F$ in $C^0([0, T]; B)$ for some limit function $F \in C^0([0, T]; B)$. We now define the function $w \in L^\infty (]0, T[ \times \R^d)$ by setting $w(t, \cdot) : = F(t)$ and fix a test function $\varphi \in L^1 (]0, T[ \times \R^d)$. We note that for a.e. $t \in ]0, T[$ we can test the weak convergence $ u_n (t, \cdot) \weaks w(t, \cdot)$ with $\varphi(t, \cdot) \in L^1 (\R^d)$ 
and by \eqref{e:linfty} and Lebesgue's Dominated Convergence Theorem for the time integral this implies 
$$
    \int_0^T \int_{\R^d} \varphi(t, x) u_n(t, x) dx dt \to \int_0^T \int_{\R^d} \varphi(t, x) w(t, x) dx dt, \quad \text{for every $\varphi \in L^1 (]0, T[ \times \R^d)$}.
$$
By~\eqref{e:weaks} and the uniqueness of the weak$^\ast$ limit this yields $w=u$, which eventually  implies~\eqref{e:pointwise}. 
\end{proof}
\subsection{Proof of Theorem~\ref{t:ex}, item (ii)} \label{ss:exii}
The proof of Theorem~\ref{t:ex} item (ii) follows the same lines as the proof of Theorem~\ref{t:ex} item (i), so we only provide a sketch. We construct sequences $\{ u_{0n} \}_{n \in \mathbb N}$ and $\{ \eta_n \}_{n \in \mathbb N}$ such that 
\begin{equation*}
\begin{split}
     & u_{0n} \to  u_0 \quad \text{strongly in $L^q (\R^d)$}, \quad \| u_{0n} \|_{L^q (\R^d)} \leq \| u_{0} \|_{L^q (\R^d)}, \quad  u_{0n} \in C^\infty_c (\R^d),\\
    &  \eta_n \to \eta  \quad \text{strongly in $L^p (\R^d; \R^N)$}, \quad \| \nabla \eta_n \|_{L^p (\R^d; \R^{N\times d})} 
      \leq \| \nabla \eta \|_{L^p (\R^d; \R^{N\times d})}, \quad  \eta_{n} \in C^\infty_c (\R^d).
\end{split}
\end{equation*}
We term $\{ u_n \}_{n \in \mathbb N}$ the corresponding solution of~\eqref{e:nonlocal} and recall that $T_q$ is given by~\eqref{e:T}. 
Owing to Lemma~\ref{l:exreg}, for every $T < T_q$ and up to subsequences we have 
$$
    u_n \rightharpoonup u \; \quad \text{weakly in $L^q (]0, T[ \times \R^d)$}
$$
for some limit function $u \in L^q (]0, T[ \times \R^d).$ To conclude the proof of item (ii) it suffices to show that 
$$ 
      V(\cdot, \cdot, u_n \ast \eta_n) \to  V(\cdot, \cdot, u \ast \eta) \; 
      \quad \text{strongly in $L^p_{\mathrm{loc}}(]0, T[ \times \R^d; \R^d)$}.
$$
Towards this end, we can follow the same argument as in {\sc Step 1} and {\sc Step 2} of the proof of item~(i). Note that when applying the Arzel\`a-Ascoli Theorem we consider the same sequence of functions as in~\eqref{e:Fn}, but this time $B$ denotes a suitable ball in $L^q(\R^d)$ endowed with the weak topology if $q>1$, and, if $q=1$, the set of finite Radon measures with total variation bounded by a suitable constant and endowed with the weak$^\ast$ topology. \qed

\section{Proof of Theorem~\ref{t:uni}}\label{s:uni}
The exposition is organized as follows: in \S\ref{ss:puni} we establish item (i) and in \S\ref{s:punii} we establish item~(ii). 
\subsection{Proof of Theorem~\ref{t:uni}, item (i)} \label{ss:puni}
We fix two solutions $u_1, u_2 \in L^\infty (]0, T[ \times \R^d)$ of the same Cauchy problem~\eqref{e:nonlocal}. Our aim is to show that $u_1\equiv u_2$. We  proceed according to the following steps. \\
{\sc Step 1:} we recall~\eqref{e:21} and term $X_1(t, x)$ and $X_2 (t, x)$ the solutions of the Cauchy problems
\begin{equation} \label{e:cauchy}
\left\{
\begin{array}{ll}
   \displaystyle{\frac{d X_1}{dt} = V(t, X_1, u_1 \ast \eta(X_1))} \\
   \phantom{x} \\
   X_1(0, x) =x \\
\end{array}
\right.
\quad \text{and} \quad 
\left\{
\begin{array}{ll}
   \displaystyle{\frac{d X_2}{dt} = V(t, X_2, u_2 \ast \eta(X_2))} \\
   \phantom{x} \\
   X_2(0, x) =x, \\
\end{array}
\right.
\end{equation}
respectively. As pointed out before, the well-posedness of the above Cauchy problems follows from Lemma~\ref{l:Lipcont}, combined with the classical Cauchy-Lipschitz-Picard-Lindel\"of Theorem for ODEs. Also, since 
\begin{equation} \label{e:x1x2}
\begin{split}
    \frac{d}{dt} |X_1 -X_2| & \leq |V(t, X_1, u_1 \ast \eta(X_1)) - V (t, X_2, u_2 \ast \eta(X_2))| \\
    & \leq L |X_1 - X_2| + L|u_1 \ast \eta(X_1) - u_2\ast \eta(X_2) | \\
    & \leq L |X_1 - X_2| + L [ \| u_1 \|_{L^\infty} +  \| u_2 \|_{L^\infty} ] \|\eta \|_{L^1}\,,
\end{split}
\end{equation}
by the Gr\"onwall Lemma we conclude that $|X_1 - X_2 | \in L^\infty_\loc([0,T[;L^\infty (\R^d))$. \\
{\sc Step 2:}  we fix $R>0$ and we set 
\begin{equation}
\label{e:qerre}
       Q_R (t) : = \int_{\R^d} \chi_R (x) |u_0(x)| |X_1(t, x) - X_2 (t, x)| dx, 
\end{equation} 
where 
\begin{equation}
\label{e:chiR}
         \chi_R(x) : =
         \left\{
         \begin{array}{ll}
         1 & \text{if $u_0 \in L^1 (\R^d)$,} \\
         \exp( \sfrac{-|x|}{R}) & \text{if $u_0 \notin L^1 (\R^d)$}.  \\
         \end{array}
         \right.
\end{equation}
Note that if $u_0 \in L^1 (\R^d)$ we are abusing notation since actually $\chi_R$, and henceforth $Q_R$, does not depend on $R$. Also, owing to {\sc Step 1} and to the assumption $u_0 \in L^\infty (\R^d)$,  the quantity $Q_R$ is finite for every $R>0$. 
 \\
{\sc Step 3:} assume that we have established the equality $Q_{R^\ast}(t)=0$ for some $R^\ast>0$ and every $t \in ]0, T[.$ We now show that this implies $u_1 \equiv u_2$. Towards this end, we point out that  the equality $Q_{R^\ast}(t)=0$ implies that   $ X_1(t, x) - X_2 (t, x) =0 $ {for a.e. $x \in  \{z: u_0(z) \neq 0\}$}.  
This yields $u_1 \equiv u_2$, since for every $\varphi \in C^\infty_c (\R^d)$
\begin{equation*}
\begin{split}
     & \int_{\R^d} \varphi (x) [u_1(t, x) - u_2 (t, x)] dx  \stackrel{\eqref{e:pushf2}}{=}
     \int_{\R^d} [\varphi (X_1(t, z)) -\varphi(X_2 (t, z))] u_0(z) dz
   = 0. 
\end{split}
\end{equation*}
{\sc Step 4:} we compute
\begin{equation} \label{e:Q}
\begin{split}
 \dfrac{dQ_R}{dt}& \stackrel{\eqref{e:cauchy},\eqref{e:qerre}}{\leq} \int_{\mathbb{R}^d} \chi_R  |u_0| |
 b_1(t,X_1) - b_2(t,X_2)| \, dx  \\
 &\leq \int_{\mathbb{R}^d} \chi_R    |u_0| 
 | b_1(t,X_1) - b_1(t,X_2)|
 \, dx+ 
 \int_{\mathbb{R}^d} \chi_R  |u_0| | b_1(t,X_2) - b_2(t,X_2)|\, dx
 =: I+II 
\end{split}
\end{equation}
where we have used the notation $b_i(t,x) = V(t, x, [u_i \ast \eta ](t, x))$ for $i=1,2$, which is consistent with~\eqref{e:b}.
To control the first integral at the right-hand side of \eqref{e:Q} we use the Lipschitz bound on $b_1$ given by \eqref{e:lipb} and conclude that
\be \label{e:uno} \begin{split}
 I \leq [L + L |D \eta| (\R^d) \| u_1 \|_{L^{\infty}} ] \int_{\mathbb{R}^d} \chi_R (x) |u_0(x)| |X_1(t,x) - X_2(t,x)| \, dx=[L + L |D \eta| (\R^d) \| u_1 \|_{L^{\infty}} ]
  Q_R(t) .
\end{split}
\eq
Since $u_1, u_2 \in L^\infty$, by approximation the equality~\eqref{e:pushf2} holds for every test function $\varphi \in L^1 (\R^d)$. This yields a control on the integrand in $II$, namely
\begin{equation} \label{e:IIf}
\begin{split}
 | b_1(t,X_2) - b_2(t,X_2)|
   &\leq L | (u_1 - u_2)\ast \eta |(t, X_2) = L
 \left| \int_{\mathbb{R}^d} \eta(X_2(t, x) - y)[u_1(t,y) - u_2(t,y)] \, dy \right| \\ &
\stackrel{\eqref{e:pushf2}}{=} L
 \left| \int_{\mathbb{R}^d} \left[ \eta(X_2(t, x) - X_1(t, y)) - \eta(X_2(t, x) - X_2(t, y)) \right] u_0(y) \, dy \right|.
\end{split}
\end{equation}
{\sc Step 5:} we conclude the proof of Theorem~\ref{t:uni} in the case $u_0 \in L^1 (\R^d)$. We recall that in this case $\chi_R \equiv 1$, plug~\eqref{e:IIf} into the expression for $II$ and change the order of integration. We arrive at 
\begin{equation} \label{e:due}
\begin{split}
    II & \leq L \int_{\mathbb{R}^d} |u_0(y)|
  \int_{\mathbb{R}^d}  |\eta(X_2(t, x) - X_1(t, y)) - \eta(X_2(t, x) - X_2(t, y)) | |u_0(x)|  dx  dy  \\
     & \stackrel{\eqref{e:pushf}}{=}
 L \int_{\mathbb{R}^d} |u_0(y)|
  \int_{\mathbb{R}^d}  |\eta(x - X_1(t, y)) - \eta(x- X_2(t, y)) | |u_2(t, x)|  dx  dy  \\ 
     & \leq L |D \eta| (\R^d)  \| u_2 \|_{L^\infty} \int_{\mathbb{R}^d} |u_0(y)| |X_1 (t, y) - X_2 (t, y)| dy 
     \stackrel{\chi_R \equiv 1 }{=}   L |D \eta| (\R^d)  \| u_2 \|_{L^\infty}  Q_R (t)
  \end{split}
\end{equation}
and by plugging~\eqref{e:uno} and~\eqref{e:due} into~\eqref{e:Q}, using Gr\"onwall's Lemma and recalling that $Q_R(0) =0$ we conclude that $Q_R(t) =0$ for every $t \in ]0, T[$ and $R>0$. \\
{\sc Step 6:}  we conclude the proof of Theorem~\ref{t:uni} under assumption~\eqref{e:etaR}. In this case we 
have to introduce a slightly more technical argument owing to the presence of the cutoff function in the definition of 
$Q_R(t)$. Quite loosely speaking, the basic idea underpinning our argument is that we employ a computation like~\eqref{e:due} in some ``big" region $ \left\{(x, y):  |y | \leq |x| + c_L R\right\}$, and control what is left by using~\eqref{e:etaR}.  We now provide the technical details: we set for any $x \in \R^d$  
\be \label{e:Ex}
       E(x) : = \left\{ y \in \R^d:  |y | \leq |x| + c_L R\right\},
\eq
where $c_L$ denotes a constant (only depending on the Lipschitz constant $L$), whose exact value will be determined in {\sc Step 7}. Owing to the equality $ \chi_R(x) = \chi_R(y) \exp 
\big(  \frac{|y|- |x|}{R} \big)
$ and to~\eqref{e:Ex} we have 
\be \label{e:stimachi}
     \chi_R (x) 
     {\leq} e^{c_L} \chi_R (y) \; \quad \text{for every $y \in E(x)$ and $x\in\R^d$}.
\eq
Next, we recall that $II$ is defined as in~\eqref{e:Q} and conclude that 
\begin{equation} \label{e:2}
\begin{split}
     II& \stackrel{\eqref{e:IIf}}{\leq} L \int_{\mathbb{R}^d} \chi_R (x) |u_0(x)|
 \left| \int_{\mathbb{R}^d} \left[ \eta(X_2(t, x) - X_1(t, y)) - \eta(X_2(t, x) - X_2(t, y)) \right] u_0(y) \, dy \right|  \, dx \\
 & \stackrel{\eqref{e:Ex}}{\leq}
   \underbrace{ L \int_{\mathbb{R}^d} \chi_R (x) |u_0(x)|
 \left| \int_{E(x)} \left[ \eta(X_2(t, x) - X_1(t, y)) - \eta(X_2(t, x) - X_2(t, y)) \right] u_0(y) \, dy \right|  \, dx}_{:=II_A}   \\
 &\qquad + \underbrace{L \int_{\mathbb{R}^d} \chi_R (x) |u_0(x)|
      \int_{\R^d \setminus E(x)} |\eta(X_2(t, x) - X_1(t, y))| |u_0(y) | dy \, dx}_{: = II_{B1}} \\ &\qquad + 
       \underbrace{L \int_{\mathbb{R}^d} \chi_R (x) |u_0(x)|
      \int_{\R^d \setminus E(x)} |\eta(X_2(t, x) - X_2(t, y))| |u_0(y) | dy \, dx}_{:= II_{B2}}.
\end{split}
\end{equation}
We now control $II_A$ by using the Fubini Theorem as follows:
\begin{equation} \label{e:2A}
\begin{split}
    &II_A  \stackrel{\eqref{e:stimachi}}{\leq}
      \! L  e^{c_L} \int_{\mathbb{R}^d} \!  |u_0(x)|
  \int_{E(x)} \! \!\chi_R (y) \left| \eta(X_2(t, x) - X_1(t, y)) - \eta(X_2(t, x) - X_2(t, y)) \right| |u_0(y)| dy  dx\\
    & \stackrel{\text{Fubini}}{\leq}  L  e^{c_L}  \int_{\mathbb{R}^d} \int_{\R^d} |u_0(x)|
       \chi_R (y) \left| \eta(X_2(t, x) - X_1(t, y)) - \eta(X_2(t, x) - X_2(t, y)) \right| |u_0(y)| \, dx \, dy \\
        & \stackrel{\eqref{e:pushf}}{=}
         L  e^{c_L}  \int_{\mathbb{R}^d} |u_0(y)| \chi_R(y) \int_{\R^d} \left| \eta(x - X_1(t, y)) - \eta(x - X_2(t, y)) \right| |u_2 (t, x)| dx \ dy \\
         & \leq L e^{c_L}  |D \eta | (\R^d) \| u_2 \|_{L^\infty} 
         \int_{\mathbb{R}^d} |u_0(y)| \chi_R(y) |X_1(t, y)) - X_2(t, y)| dy =  L e^{c_L} |D \eta | (\R^d) \| u_2 \|_{L^\infty} Q_R(t). 
\end{split}
\end{equation}
We now control $  II_{B1}$: 
\begin{equation} \label{e:BII1}
\begin{split}
    II_{B1} & \stackrel{\xi = X_2(t, x) - X_1 (t, y)}{=} L \int_{\mathbb{R}^d} \chi_R (x) |u_0(x)|   \int_{D(t, x)} |\eta(\xi)| |u_1(t, X_2(t, x) - \xi) | d\xi \, dx \\
     & \leq L \| u_1 \|_{L^\infty} \int_{\mathbb{R}^d} \chi_R (x) |u_0(x)|   \int_{D(t, x)} |\eta(\xi)| d \xi dx, 
    \end{split}
\end{equation}
provided 
\be \label{e:D}
    D(t, x) : = \{ \xi \in \R^d: \xi = X_2(t, x) - X_1 (t, y), \; \text{with $y \in \R^d \! \setminus \!  E(x)$} \}     \,.
\eq
Assume for a moment we have established the inclusion 
\be \label{e:include}
       D(t, x) \subseteq \R^d \! \setminus \! B_R (0) \quad \text{for every $t \in [0, 1]$ and $R \ge C (\| u_1 \|_{L^\infty}, \| u_2 \|_{L^\infty}, L, \| \eta \|_{L^1})$.}
\eq
Owing to~\eqref{e:BII1} we then have for any such $R$
\begin{equation} \label{e:2B1}
\begin{split}
      II_{B1} & \leq L \| u_1 \|_{L^\infty} \left( \int_{\mathbb{R}^d} \chi_R (x) |u_0(x)| \right) \left(  \int_{\R^d \setminus B_R(0)} |\eta(\xi)| d \xi \right) \\
      &
      \stackrel{\chi_R (x) =  \exp(-|x|/R)}{\leq}  C(d) L R^d \| u_1 \|_{L^\infty} \| u_0 \|_{L^\infty}  \left(  \int_{\R^d \setminus B_R(0)} |\eta(\xi)| d \xi \right).
\end{split}
\end{equation}
By relying on an analogous argument we show that 
\begin{equation} \label{e:2B2}
\begin{split}
      II_{B2} & \leq  C(d) L R^d \| u_2 \|_{L^\infty} \| u_0 \|_{L^\infty}  \left(  \int_{\R^d \setminus B_R(0)} |\eta(\xi)| d \xi \right).
\end{split}
\end{equation}
We now combine~\eqref{e:Q},~\eqref{e:uno},~\eqref{e:2},~\eqref{e:2A},~\eqref{e:2B1} and~\eqref{e:2B2}, use the Gr\"onwall Lemma and recall that $Q_R(0)=0$. This yields
\be \label{e:quasi}
    Q_R(t)  \leq R^d \left(  \int_{\R^d \setminus B_R(0)} |\eta(\xi)| d \xi \right) \mathcal{K}_1 \big[ \exp (\mathcal{K}_2 t ) - 1\big], 
\eq
provided $\mathcal K_1$ and $\mathcal K_2$ are suitable constants (which could be explicitly computed) only depending on $d$, $L$, $|D \eta|(\R^d)$, $\| u_1 \|_{L^\infty} $, $\| u_2 \|_{L^\infty} $ and 
$\| u_0 \|_{L^\infty} $.   To conclude, we point out that the map $R \mapsto Q_R(t)$ is monotone non-decreasing, which yields for any $R^\ast>0$
\be \label{e:decade}
   Q_{R^\ast} (t) \leq \lim_{R \to + \infty} Q_R(t) \stackrel{\eqref{e:etaR},\eqref{e:quasi}}{=}0, \quad \text{for every $t \in [0, 1]$} \eq 
and hence by {\sc Step 3} to the identity $u_1 \equiv u_2$ on $t \in ]0, 1[$. By iterating the argument, we establish the identity $u_1 \equiv u_2$ on the time interval $t \in ]0, T[$. \\
 {\sc Step 7:} to conclude the proof of item (i) we are thus left to establish~\eqref{e:include}, where $D$ is the same as in~\eqref{e:D}. Towards this end, we recall~\eqref{e:cauchy} and argue as in~\eqref{e:x1x2} to get
$$
\left| \frac{d}{dt} | X_1(t,y)-X_2(t,y)| \right| 
\leq
L |X_1(t,y) - X_2(t,x)| + L [ \| u_1 \|_{L^\infty} +  \| u_2 \|_{L^\infty} ] \|\eta \|_{L^1}\,,
$$
which upon integration in time implies 
%
\begin{equation*}
\begin{split}
     |X_1 (t, y)  - X_2 (t, x)|  
     & \ge  |y-x| e^{-Lt} -   [ \| u_1 \|_{L^\infty} +  \| u_1 \|_{L^\infty} ] \|\eta \|_{L^1} [1-e^{-Lt}] \\
     &  \stackrel{t \leq 1}{\geq}   |y-x| e^{-L} -   [ \| u_1 \|_{L^\infty} +  \| u_1 \|_{L^\infty} ] \|\eta \|_{L^1} [1-e^{-L}] \\ & 
     \stackrel{y \in \R^d \setminus  E(x)} {\ge} c_L R e^{-L}    -   [ \| u_1 \|_{L^\infty} +  \| u_1 \|_{L^\infty} ] \|\eta \|_{L^1} [1-e^{-L}] 
       \end{split}
\end{equation*}
for $y \in \R^d \! \setminus \! E(x) $. Choosing $c_L$ and $R$ so that
$$
   c_L e^{-L} \ge \frac{3}{2}, \qquad R \ge 2  [ \| u_1 \|_{L^\infty} +  \| u_1 \|_{L^\infty} ] \|\eta \|_{L^1} [1-e^{-L}] 
$$
we conclude that~\eqref{e:include} holds. \qed

 \subsection{Proof of Theorem~\ref{t:uni}, item (ii)} \label{s:punii}
 The proof follows the same steps as the one of item (i), so rather than repeating it we only highlight the points where the argument is different. In~\eqref{e:x1x2} we control the term  $|[u_1 - u_2] \ast \eta| $  by $[\| u_1 \|_{L^q} + \| u_2 \|_{L^q}]\| \eta \|_{L^p}$ rather than by $[\| u_1 \|_{L^\infty} + \| u_2 \|_{L^\infty}]\| \eta \|_{L^1}$. In~\eqref{e:uno} we use {H\"older} inequality and the finite difference quotients characterization of Sobolev functions 
 and get  
 \begin{equation*} \begin{split}
  \Bigg| \int_{\mathbb{R}^d} &  [\eta(X_1 - y) - \eta(X_2 - y)]u_1(t, y) \, dy \Bigg|
  {\leq} 
  \|\eta(X_1 - \cdot) - \eta(X_2 - \cdot)\|_{L^p}
  \| u_1 \|_{L^q} 
{\leq}
  \| \nabla \eta \|_{L^p } |X_1 - X_2|  \| u_1 \|_{L^q}
 \end{split}
 \end{equation*}
 and similarly in~\eqref{e:due} and~\eqref{e:2A}.  To control term $II_{B1}$ we use, instead of~\eqref{e:BII1}, the estimate 
 \begin{equation} \label{e:BII1p}
\begin{split}
    II_{B1} & \stackrel{\xi = X_2(t, x) - X_1 (t, y)}{=} L \int_{\mathbb{R}^d} \chi_R (x) |u_0(x)|   \int_{D(t, x)} |\eta(\xi)| |u_1(t, X_2(t, x) - \xi) | d\xi \, dx \\
     & \leq L \| u_1 (t, \cdot)\|_{L^q} \int_{\mathbb{R}^d} \chi_R (x) |u_0(x)| \left(  \int_{D(t, x)} |\eta(\xi)|^p d \xi \right)^{1/p} \!  dx , 
    \end{split}
\end{equation}
 and similarly for $II_{B_2}$. Finally, in~\eqref{e:decade} we use~\eqref{e:etaRp} rather than~\eqref{e:etaR}. \qed

\section{Proof of Theorem~\ref{t:cex}} \label{s:cex}
The exposition is organized as follows: in \S\ref{ss:ualphan} we discuss some preliminary results and in \S\ref{ss:proof13} we complete the proof of Theorem~\ref{t:cex}. 
\subsection{Preliminary results}\label{ss:ualphan}
We recall the discussion in~\cite[p.4065]{KP}, apply Theorem~\ref{t:uni} and conclude that  for every $T < \sfrac{1}{2}$, the Cauchy problem~\eqref{e:nonlocal2} has a unique solution $u \in L^\infty(]0, T[ \times \R)$, which is given by formula~\eqref{e:formulaex}.  Next, we fix $\alpha \in [0, 1]$ and set 
\be \label{e:etaalpha}
      \eta^\alpha_n(x) : = \left\{
      \begin{array}{ll}
      0 & x \leq - r_\alpha - \sfrac{1}{n}  \\
      n x + n r_\alpha + 1 & - r^\alpha_n - \sfrac{1}{n} \leq x \leq - r^\alpha_n \\
      1 & - r^\alpha_n \leq x \leq \sfrac{-2}{n} \\
       n(\alpha -1 ) x + 1 + n (\alpha-1) \quad & \sfrac{-2}{n} \leq x \leq \sfrac{-1}{n} \\
       \alpha & \sfrac{-1}{n} \leq x \leq \sfrac{1}{n} \\
       - \alpha n x + 2 \alpha & \sfrac{1}{n} \leq x \leq \sfrac{2}{n} \\
       0 & x \ge \sfrac{2}{n}, \\
      \end{array}
      \right.
\eq
 where $r^\alpha_n>0$ is a suitable constant to be chosen in such a way that 
 \be \label{e:errealpha}
     \int_{\R} \eta^\alpha_n (x) dx =1 . 
 \eq 
See Figure~\ref{f:2} for a representation of $\eta_n^\alpha$. Note that by combining~\eqref{e:etaalpha} and~\eqref{e:errealpha} one gets~\eqref{e:etaconv}. We now consider the Cauchy problem~\eqref{e:nonlocal3}, with $\eta_n^\alpha$ in place of $\eta_n$, and term $u^n_\alpha$ its solution. More precisely, owing to the regularity of the kernel $\eta^n_\alpha$ it is well known that there is a unique $u^\alpha_n \in L^\infty_{\mathrm{loc}} (\R_+ \times \R)$ solving~\eqref{e:nonlocal3}, see for instance~\cite{CGLM,GianMag,KP}.  
To establish~\eqref{e:limitcex} we need the following result. 
 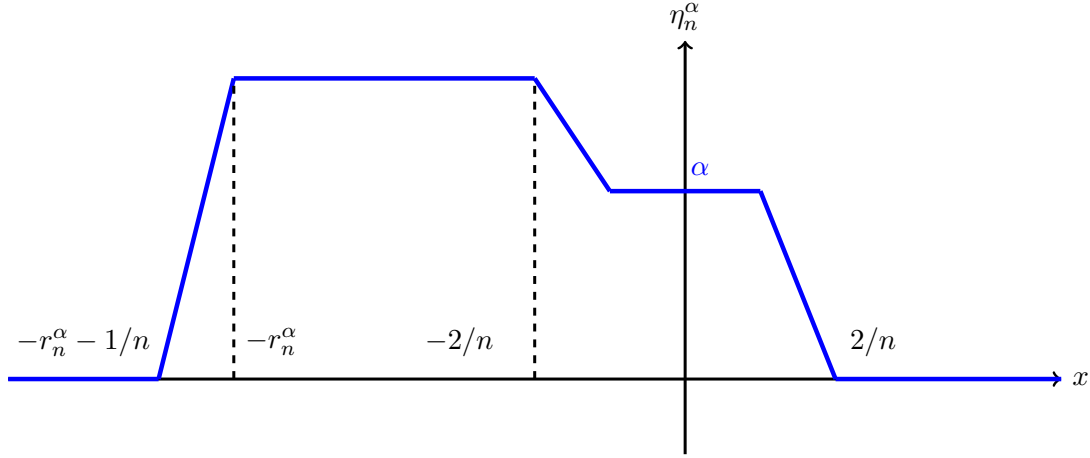
\begin{figure}
\begin{center}
\caption{The function $\eta^\alpha_n$ defined as in~\eqref{e:etaalpha}} 
\label{f:2}
\begin{tikzpicture}
\draw[line width=0.4mm, ->]   (-7, 0) -- (7, 0) node[anchor=west] {$x$}; 
\draw[line width=0.4mm, ->]   (2, -1) -- (2, 4.5) node[anchor=south] {$\eta^\alpha_n$}; 
\draw[line width=0.6mm, blue]   (-7, 0) -- (-5, 0);
\draw[line width=0.6mm, blue]   (-5, 0)--(-4, 4);
\draw[line width=0.6mm, blue]   (-4, 4)--(0,4);
\draw[line width=0.6mm, blue]   (0, 4)--(1,2.5);
\draw[line width=0.6mm, blue]   (1,2.5)--(3, 2.5);
\draw[line width=0.6mm, blue]   (3, 2.5)--(4,0);
\draw[line width=0.6mm, blue]   (4,0)--(7,0);
\draw[dashed, line width=0.4mm]   (-4,0)--(-4, 4);
\draw[dashed, line width=0.4mm]   (0,0)--(0, 4);
\draw  (-3.5,0.5) node {$- r^\alpha_n$};
\draw  (-6,0.5) node {$- r^\alpha_n-1/n$};
\draw  (-1,0.5) node {$- 2/n$};
\draw  (4.5,0.5) node {$ 2 /n$};
\draw  (2.2, 2.8) node {\color{blue}$\alpha$};
\end{tikzpicture}
\end{center}
\end{figure}
\begin{lemma} \label{l:linftyalpha}
Let $u_n^\alpha$ be the solution of the Cauchy problem~\eqref{e:nonlocal3}, with $\eta_n^\alpha$ defined in~\eqref{e:etaalpha},~\eqref{e:errealpha} in place of $\eta_n$. Then 
\be \label{e:linftyalpha}
     0 \leq u_n^\alpha (t, x) \leq \frac{2}{1-2t} \quad \text{for every $\alpha \in [0, 1]$, $n \in \mathbb N$ and a.e. $(t, x) \in [0, \sfrac{1}{2}[ \times \R$.}
\eq
\end{lemma}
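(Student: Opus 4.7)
The plan is to run a characteristic-based argument and close it with a scalar ODE comparison, exploiting the very specific piecewise-linear structure of $\eta_n^\alpha$. Since $\eta_n^\alpha\in W^{1,\infty}(\R)$ and $u_n^\alpha\in L^\infty_{\loc}(\R_+\times\R)$, Lemma~\ref{l:Lipcont} ensures that $b(t,x):=(u_n^\alpha *\eta_n^\alpha)(t,x)$ is Lipschitz in $x$ uniformly in $t$ on bounded time intervals, so the flow $X(t,\cdot)$ from~\eqref{e:21} is well-defined and the exponential representation~\eqref{e:repfor} together with the pushforward~\eqref{e:pushf2} apply. The nonnegativity $u_n^\alpha\geq 0$ is then immediate from $u_0=2\ind_{]0,\sfrac{1}{2}[}\geq 0$ and the fact that $X(t,\cdot)$ is a bijection.

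The core step is an a priori pointwise bound for $\partial_x b$ in terms of $M(t):=\|u_n^\alpha(t,\cdot)\|_{L^\infty(\R)}$. Differentiating the convolution and reading off~\eqref{e:etaalpha}, the distributional derivative $(\eta_n^\alpha)'$ is supported on three ramps: an increasing ramp from height $0$ to height $1$ and two decreasing ramps, from $1$ down to $\alpha$ and from $\alpha$ down to $0$. A direct computation therefore gives
\begin{equation*}
\bigl\|[(\eta_n^\alpha)']^+\bigr\|_{L^1(\R)}=1, \qquad \bigl\|[(\eta_n^\alpha)']^-\bigr\|_{L^1(\R)}=(1-\alpha)+\alpha=1,
\end{equation*}
uniformly in $\alpha\in[0,1]$ and $n\in\mathbb N$. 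Combined with $u_n^\alpha(t,\cdot)\geq 0$, this yields $|\partial_x b(t,x)|\leq M(t)$ pointwise.

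Plugging this bound into~\eqref{e:repfor} along characteristics and taking the essential supremum over the initial point yields
\begin{equation*}
M(t)\;\leq\; 2\exp\Bigl(\int_0^t M(s)\,ds\Bigr).
\end{equation*}
Setting $F(t):=\int_0^t M(s)\,ds$, this reads $F'(t)\leq 2e^{F(t)}$, whose integration with $F(0)=0$ gives $1-e^{-F(t)}\leq 2t$, i.e.\ $e^{F(t)}\leq 1/(1-2t)$, and hence $M(t)\leq 2/(1-2t)$ on $[0,\sfrac{1}{2}[$. The only non-routine ingredient is the $L^1$ identity for $[(\eta_n^\alpha)']^-$; this is not really an obstacle but a design feature of the approximation, forced by the fact that $\eta_n^\alpha$ peaks exactly at the value $1$ and returns to $0$, matching the $L^1$ mass of the weak derivative of the limit kernel $\ind_{]-1,0[}$. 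Any $\alpha$-dependence cancels precisely in the sum $(1-\alpha)+\alpha$, which is what makes the bound uniform in $\alpha$.
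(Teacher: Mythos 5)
Your proof is correct, and it takes a genuinely different and arguably more streamlined route than the paper's.

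The paper's argument is geometric in nature: it first shows (via the differential inequality~\eqref{e:427}) that the support width $X_n^\alpha(t,\sfrac{1}{2}) - X_n^\alpha(t,0)$ never exceeds $\sfrac{3}{4}$, so that inside the support only the \emph{non-positive} part of $(\eta_n^\alpha)'$ is ever convolved against $u_n^\alpha$; it then uses $0 \leq \eta_n^\alpha \leq 1$ through the fundamental theorem of calculus to obtain a pointwise differential inequality $g'(t) \leq g^2(t)$ for $g(t) := \sup_x u_n^\alpha(t,x)$, and closes with ODE comparison. Your argument instead extracts only the quantitative information $\|[(\eta_n^\alpha)']^+\|_{L^1} = \|[(\eta_n^\alpha)']^-\|_{L^1} = 1$, uniformly in $\alpha$ and $n$, which together with $u_n^\alpha \geq 0$ gives the two-sided bound $|\partial_x(u_n^\alpha\ast\eta_n^\alpha)|\leq M(t)$ with $M(t):=\|u_n^\alpha(t,\cdot)\|_{L^\infty}$ (here only the lower bound $\partial_x(u_n^\alpha\ast\eta_n^\alpha)\geq -M(t)$ is actually needed). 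Feeding this into the characteristic representation gives the integral inequality $M(t)\leq 2\exp\bigl(\int_0^t M(s)\,ds\bigr)$, whose solution indeed yields the same sharp bound $M(t)\leq \sfrac{2}{(1-2t)}$. The net gain of your approach is that it bypasses the auxiliary localization step~\eqref{e:419} entirely, replacing a geometric confinement argument by a global $L^1$ computation on $(\eta_n^\alpha)'$; the price is that it is less informative about the shape of the solution. Note that in~\eqref{e:etaalpha} the decomposition of $[(\eta_n^\alpha)']^-$ into the two decreasing ramps degenerates at $\alpha\in\{0,1\}$, but the total mass $(1-\alpha)+\alpha=1$ is still correct there, so the estimate is uniform as you claim. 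One small cosmetic point: as stated in the paper~\eqref{e:repfor} has a sign typo (compare~\eqref{e:415}, which carries the correct minus sign); since you only invoke the absolute-value bound $|\partial_x b|\leq M(t)$ this does not affect your argument, but it is worth being aware of.
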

\begin{proof}
We proceed according to the following steps. \\
{\sc Step 1:} we point out that~\eqref{e:b},~\eqref{e:21} boil down to
\be \label{e:charalphan}
\left\{
\begin{array}{ll}
\displaystyle{\frac{d X^\alpha_n}{dt}=  [\eta^\alpha_n \ast u^\alpha_n] (t, X^\alpha_n)} \\
\phantom{c}\\
X^\alpha_n (0, x) = x
\end{array}
\right.
\eq
and
\be \label{e:415} \begin{split}
   u^\alpha_n (t, X_n^\alpha(t, y)) \stackrel{\eqref{e:charalphan}}{=}     2 \mathbbm{1}_{[0, \sfrac{1}{2}]}(y)
    \exp \left( - \int_0^t  [(\eta^\alpha_n)' \ast u^\alpha_n]  (s, X_n^\alpha(s, y)) \right),
\end{split}
\eq
respectively. This immediately yields the first inequality (non-negativity) in~\eqref{e:linftyalpha}, and also the following implication:
\be \label{e:implica}
      \text{for every $t \in \R_+$, $\alpha \in [0, 1]$ and $n \in \mathbb N$}, \;u^\alpha_n (t, x) >0 \implies X^\alpha_n (t, 0) \leq x \leq X^\alpha_n (t, \sfrac{1}{2}).
\eq
{\sc Step 2:} we set for any $n \in \mathbb{N}$
\be \label{e:gi}
\begin{split}
      g(t) : & = \sup \{ u^\alpha_n (t, x), \; x \in \R \}   \stackrel{\eqref{e:implica}}{=}  \sup \{ u^\alpha_n (t, x), \;  X^\alpha_n (t, 0) \leq x \leq  X^\alpha_n (t, \sfrac{1}{2}) \} \\&
    =   \sup \{ u^\alpha_n (t, X^\alpha_n(t, y)), \; 0 \leq y \leq  \sfrac{1}{2} \}= \max 
     \{ u^\alpha_n (t, X^\alpha_n(t, y)), \; 0 \leq y \leq  \sfrac{1}{2} \} 
\end{split}
\eq 
and point out that the map $g$ is locally Lipschitz continuous on $\R$, since it is the maximum of uniformly Lipschitz in time functions by~\eqref{e:415}. Our goal is now to show that 
\be \label{e:gg2}
   g'(t) \leq g^2 (t) \quad \text{for a.e. $t \in ]0, \sfrac{1}{2}[$}. 
\eq
With~\eqref{e:gg2} in place, the inequality \eqref{e:linftyalpha} follows by the classical Comparison Theorem for ODEs and the equality $g(0)=2$. To establish~\eqref{e:gg2}, 
we fix a point $t_\ast \in ]0, \sfrac{1}{2}[$ at which $g$ is differentiable and by using~\eqref{e:gi} we conclude that 
\be \label{e:gast}
g(t_\ast) = u^\alpha_n (t_\ast, x_\ast) \quad  \text{for some $x_\ast \in [X^\alpha_n(t_\ast, 0), X^\alpha_n (t_\ast, \sfrac{1}{2})]$.}
\eq
We now let $a_\ast \in [0,\sfrac{1}{2}]$ such that $X^\alpha_n (t_\ast,a_\ast) = x_\ast$ and set
\be \label{e:effe}
   f(t) : = u^\alpha_n (t, X^\alpha_n (t,a_\ast)),
\eq 
and observe that $f (t) \leq g(t)$ for every $t \in \R+$. Note that the derivative of $f$ is the material derivative 
\be \label{e:derivoeffe}
     f'(t)= - u^\alpha_n \partial_x [u^\alpha_n \ast \eta^\alpha_n] =  - u^\alpha_n  [u^\alpha_n \ast (\eta^\alpha_n)']
\eq
and hence $f$ is a differentiable  function. We rely on the following elementary property:
\emph{Assume that $I \subseteq \R$ is  an interval, and $f, g: I \to \R$ are two  continuous functions with $f \ge g$. If both $f$ and $g$ are differentiable at $\tau \in I$ and $f(\tau) = g(\tau)$, then $f'(\tau) = g'(\tau)$. } (Indeed, set $h := f-g \ge 0$ and observe that if $h(\tau) =0$ then $\tau$ is a point of minimum $h$ and therefore by the differentiability of $h$ at $\tau$ we get $h'(\tau)=0$). It follows  that in order to establish the inequality $g'(t_\ast) \leq g^2 (t_\ast)$ it suffices to show that 
\be \label{e:effegquadro} 
   f'(t_\ast) \leq g^2(t_\ast). 
\eq
Towards this end, we  point out that by combining~\eqref{e:etaalpha} and~\eqref{e:errealpha} an elementary computation yields
\be \label{e:bderre}
      r^\alpha_n \ge 3/4 \quad \text{for every $n \ge 18$ and $\alpha \in [0, 1]$}.
\eq
We now show that 
\be \label{e:419}
      X^\alpha_n(t, \sfrac{1}{2}) - X^\alpha_n(t, 0) \leq \sfrac{3}{4}, \quad \text{for every $t \in \R_+$, $n \ge 18$ and $\alpha \in [0, 1]$}.
\eq
Note that by combining~\eqref{e:bderre} and \eqref{e:419} with~\eqref{e:etaalpha} we get
\be \label{e:420}
    ( \eta^\alpha_n)' ( x -y) \leq 0, \; \text{for every $t\geq 0$,  $x, y  \in [X^\alpha_n(t, 0), X^\alpha_n (t, \sfrac{1}{2})]$}.
\eq
To establish~\eqref{e:419} we write a differential inequality for the quantity $X^\alpha_n(t, \sfrac{1}{2}) - X^\alpha_n(t, 0)$. Indeed, we point out that $X^\alpha_n(0, \sfrac{1}{2}) - X^\alpha_n(0, 0)=\sfrac{1}{2}$ and that as long as this quantity is smaller than $3/4$ we have~\eqref{e:420} and hence  
\begin{equation}\label{e:427}
\begin{split}
    \frac{d [X^\alpha_n(t, \sfrac{1}{2}) - X^\alpha_n(t, 0)]}{dt}& \stackrel{\eqref{e:charalphan}}{=} \int_{\R}
     \big[ \eta_n^\alpha (X^\alpha_n(t, \sfrac{1}{2}) -y) - \eta_n^\alpha (X^\alpha_n(t, 0) -y) \big] u^\alpha_n (t, y) dy 
     \\ & \stackrel{\eqref{e:implica}}{=}
     \int_{X^\alpha_n(t, 0)}^{X^\alpha_n(t, \sfrac{1}{2})}
     \big[ \eta_n^\alpha (X^\alpha_n(t, \sfrac{1}{2}) -y) - \eta_n^\alpha (X^\alpha_n(t, 0) -y) \big] u^\alpha_n (t, y) dy \\
    & =
     \int_{X^\alpha_n(t, 0)}^{X^\alpha_n(t, \sfrac{1}{2})} u^\alpha_n(t, y)  \int_{X^\alpha_n(t, 0)}^{X^\alpha_n(t, \sfrac{1}{2})} (\eta_n^\alpha)'(z-y) dz dy  
     \stackrel{\eqref{e:420}}{\leq} 0.
\end{split}
\end{equation}
This establishes~\eqref{e:419}. We then point out that 
\begin{equation*}
\begin{split}
     f'(t_\ast) & \stackrel{\eqref{e:derivoeffe}}{=} - u^\alpha_n (t_\ast, x_\ast) [(\eta^\alpha_n)' \ast u^\alpha_n](t_\ast, x_\ast) \stackrel{\eqref{e:gast}}{=} - g(t_\ast) \int_{\R} (\eta^\alpha_n)' (x_\ast - y) u^\alpha_n (t_\ast, y) dy \\ 
& \stackrel{\eqref{e:implica}}{=} g(t_\ast) 
\int_{X^\alpha_n(t_\ast, 0)}^{X^\alpha_n(t_\ast, \sfrac{1}{2})}\underbrace{ (- \eta^\alpha_n)' (x_\ast - y)}_{\ge 0 \, \text{by \eqref{e:420}}} u^\alpha_n (t_\ast, y) dy 
   \stackrel{\eqref{e:gi}}{\leq} g^2(t_\ast) 
\int_{X^\alpha_n(t_\ast, 0)}^{X^\alpha_n(t_\ast, \sfrac{1}{2})} (- \eta^\alpha_n)' (x_\ast - y) dy \\
 & = g^2(t_\ast) \big[ \eta^\alpha_n (x_\ast  -   X^\alpha_n(t_\ast, \sfrac{1}{2})) -  
        \eta^\alpha_n (x_\ast -   X^\alpha_n(t_\ast, 0))  \big] \stackrel{\eta^\alpha_n \leq 1}{\leq} 
       g^2(t_\ast),
\end{split}
\end{equation*}
that is~\eqref{e:effegquadro}. 
\end{proof}
\subsection{Proof of Theorem~\ref{t:cex}} \label{ss:proof13}
We fix $\alpha \in [0, 1]$. By using Lemma~\ref{l:linftyalpha} and by arguing as in the proof of Theorem~\ref{t:ex}, item (i), we obtain the first limit in~\eqref{e:limitcex} (the one on the time interval $]0, \sfrac{1}{2}[$).  Establishing the second limit in~\eqref{e:limitcex} amounts to show that  
\be \label{e:429}
     \lim_{n \to + \infty}
     \int_{\sfrac{1}{2}}^{T} \int_{\R} \varphi(t, x) 
u^\alpha_n (t, x) dx dt =  \int_{\sfrac{1}{2}}^{T} \varphi \! \left(t, \alpha t+ \frac{1-\alpha}{2} \right) dt \quad 
    \text{for every $\varphi \in L^1 (]\sfrac{1}{2}, + \infty[; C^0 (\R)$).}
\eq
Since the left-hand side can be rewritten as
\begin{equation*}
\begin{split}
         \int_{\sfrac{1}{2}}^{T}  \int_{\R} \varphi(t, x) 
u^\alpha_n (t, x) dx dt  & \stackrel{\eqref{e:pushf2}}{=}  \int_{\sfrac{1}{2}}^{T}  \int_{\R} \varphi(t, X^\alpha_n(t, y)) 
u_0(y) dy dt =  2 \int_{\sfrac{1}{2}}^{T}  \int_0^{\sfrac{1}{2}} \varphi(t, X^\alpha_n(t, y)) dy dt,
\end{split}
\end{equation*}
by using the elementary bound 
$$
     \left| \int_0^{\sfrac{1}{2}} \varphi(t, X^\alpha_n(t, y)) dy \right| \leq \frac{1}{2}  \| \varphi (t, \cdot) \|_{C^0}
$$
we conclude by Lebesgue's Dominated Convergence Theorem that in order to establish~\eqref{e:429} it suffices to show that
\be \label{e:inmezzo}
     \lim_{n \to \infty} X^\alpha_n (t, y) = \alpha t + \frac{1-\alpha}{2}, \quad \text{for every $y \in [0, \sfrac{1}{2}]$ and $t \ge \sfrac{1}{2}$}. 
\eq
Towards this end, we first establish compactness of the sequence $\{  X^\alpha_n (\cdot, 0) \}_{n \in \mathbb N}$ in the $C^0([0, T])$ topology.  Since the spatial $L^1$ norm of $u^\alpha_n$ is constant in time, we point out that
\be \label{e:equic}
    0 \stackrel{\eqref{e:linftyalpha}}{\leq} [u^\alpha_n \ast \eta^\alpha_n](t, x) \leq \| u^\alpha_n \|_{L^1}
     \| \eta^\alpha_n \|_{L^\infty} \stackrel{\eqref{e:nonlocal3},\eqref{e:etaalpha}}{=} 1, \quad \text{for every $(t, x) \in \R_+$, $n \in \mathbb N$}.
\eq
Next, we recall~\eqref{e:charalphan} and conclude that $0 \leq X^\alpha_n (t, 0) \leq T$ for every $n \in \mathbb N$, $t \in [0, T]$, and that the sequence $\{  X^\alpha_n (\cdot, 0) \}_{n \in \mathbb N}$ is equi-Lipschitz. We apply the Arzel\`a-Ascoli Theorem and conclude that, up to subsequences (which to simplify the notation we do not relabel), $X^\alpha_n (\cdot, 0)$ converges to some limit function $X^\alpha_\ell$ in the $C^0([0, T])$ topology. 
Similarly, up to subsequences, $\{ X^\alpha_n (\cdot,  \sfrac{1}{2})\}_{n \in \mathbb N}$ converges to some limit function $X^\alpha_r$ in the $C^0([0, T])$ topology.  Since $X^\alpha_n (t, 0) \leq X^\alpha_n (t, y) \leq X^\alpha_n (t, \sfrac{1}{2})$ for every $t \in \R_+$ and $y \in [0, \sfrac{1}{2}]$ and for every $n \in \mathbb{N}$, in order to establish~\eqref{e:inmezzo} it suffices to show that 
\be \label{e:inmezzo3}
     X^\alpha_\ell (t) \ge \alpha t + \frac{1-\alpha}{2}, \quad 
      X^\alpha_r (t) \leq \alpha t + \frac{1-\alpha}{2}, 
     \quad \text{for every $t \in [\sfrac{1}{2}, T]$}. 
\eq
We split the ODE arguments leading to \eqref{e:inmezzo3} in several steps.
\\
{\sc Step 1:} we plug~\eqref{e:420} into~\eqref{e:415} and conclude that 
\be \label{e:cresce}
     u^\alpha_n (t, x) \ge 2, \quad  \quad \text{for every $t \in \R_+$, $n \ge 18$ and $x \in [X^\alpha_n (t, 0), X^\alpha_n (t,  \sfrac{1}{2})]$.}
\eq
{\sc Step 2:} we show that $X^\alpha_\ell(\sfrac{1}{2}) = \sfrac{1}{2} = X_r (\sfrac{1}{2})$. Owing to~\eqref{e:equic} we have 
\be \label{e:dovestanno}
     X^\alpha_n (t, 0) \leq t, \quad X^\alpha_n (t, \sfrac{1}{2})\ge \sfrac{1}{2} \quad \text{for every $n \in \mathbb N$ and $t \in \R_+$} 
\eq
and hence $X^\alpha_\ell(\sfrac{1}{2}) \leq \sfrac{1}{2}$, $X^\alpha_r(\sfrac{1}{2})\ge \sfrac{1}{2}$. Assume by contradiction that 
$X^\alpha_\ell(\sfrac{1}{2}) < \sfrac{1}{2}$; this implies that there is $\ee>0$ such that
\begin{equation} \label{e:asinistra}
    X^\alpha_n (\sfrac{1}{2}, 0) \leq  \sfrac{1}{2}  -\ee < \sfrac{1}{2}  \leq X^\alpha_n (\sfrac{1}{2}, \sfrac{1}{2})
\end{equation}
for $n$ sufficiently large. We set 
$$
    E_\ee := \{ (t, x):  \sfrac{1}{2} - \sfrac{\ee}{4} < t < \sfrac{1}{2} - \sfrac{\ee}{8} \,, \;\;  \sfrac{1}{2} - \ee < x< \sfrac{1}{2}{ - \sfrac{\ee}{2} } \}
$$
and we point out that, owing to~\eqref{e:equic} and~\eqref{e:asinistra}, we have 
$$
     X^\alpha_n (t, 0) \leq x \leq  X^\alpha_n (t, \sfrac{1}{2}) \quad \text{for every $(t, x)\in E_\ee$}
$$
and every $n$ sufficiently large. Owing to~\eqref{e:cresce}, this implies 
\begin{equation} \label{e:iint}
    \iint_{E_\ee} u^\alpha_n (t, y) dtdy \ge \ee^2/8
\end{equation}
for every $n$ sufficiently large. On the other hand, owing to~\eqref{e:formulaex}, we have 
\begin{equation} \label{e:iint2} 
    \iint_{E_\ee} u (t, y) dt dy=0.
\end{equation}
By combining~\eqref{e:iint},\eqref{e:iint2} and the limit at the first line of~\eqref{e:limitcex} we get a contradiction, which allows us to conclude that $X^\alpha_\ell(\sfrac{1}{2}) = \sfrac{1}{2}$. An analogous argument yields the equality 
$X^\alpha_r(\sfrac{1}{2}) = \sfrac{1}{2}$. \\
{\sc Step 3:} we conclude the proof of~\eqref{e:inmezzo3}. We recall that, owing to~\eqref{e:charalphan}, 
\begin{equation} \label{e:2D} 
\begin{split}
    X^\alpha_n (t, 0) & =  X^\alpha_n (\sfrac{1}{2}, 0)
    + \int_{\sfrac{1}{2}}^t \int_{\R} \eta^\alpha_n (X^\alpha_n (s, 0)-y) u^\alpha_n (s, y) dy ds
    \\ & \stackrel{\eqref{e:implica}}{=}
      X^\alpha_n (\sfrac{1}{2}, 0)
    + \int_{\sfrac{1}{2}}^t \int_{X^\alpha_n (s, 0)}^{X^\alpha_n (s, \sfrac{1}{2})} \eta^\alpha_n (X^\alpha_n (s, 0)-y) u^\alpha_n (s, y) dy ds.
\end{split}
\end{equation}
We now use~\eqref{e:419}, which implies that for every $y \in [X^\alpha_n (s, 0), X^\alpha_n (s, \sfrac{1}{2})]$, 
    $s \in \R_+$ and $n \ge 18$ 
\begin{equation} \label{e:quasiquasi}
    -3/4 \leq X^\alpha_n (s, 0)-y \leq 0 \stackrel{\eqref{e:etaalpha},\eqref{e:bderre}}{\implies}  \eta^\alpha_n (X^\alpha_n (s, 0)-y) \ge \alpha.
\end{equation}
By plugging the above inequality into~\eqref{e:2D}, we get 
\begin{equation*}
\begin{split}
      X^\alpha_n (t, 0)  & \ge   X^\alpha_n (\sfrac{1}{2}, 0)
    + \int_{\sfrac{1}{2}}^t  \int_{X^\alpha_n (s, 0)}^{X^\alpha_n (s, \sfrac{1}{2})}  \alpha \, u^\alpha_n (s, y) dy ds \\& 
    \stackrel{\eqref{e:pushf2}}{=}  X^\alpha_n (\sfrac{1}{2}, 0)+  \int_{\sfrac{1}{2}}^t \alpha ds =  X^\alpha_n (\sfrac{1}{2}, 0) +\alpha [t -\sfrac{1}{2}].
\end{split}
\end{equation*}
By passing to the $n \to +\infty$ limit in the above inequality and recalling {\sc Step 2} we establish the first inequality in~\eqref{e:inmezzo3}. 
To establish the second inequality in~\eqref{e:inmezzo3} we use an analogous argument, but instead of~\eqref{e:quasiquasi} we use the observation that, for every  
 $y \in [X^\alpha_n (s, 0), X^\alpha_n (s, \sfrac{1}{2})]$, it holds that $X^\alpha_n (s, \sfrac{1}{2})-y \ge 0$ and hence $\eta^\alpha_n (X^\alpha_n (s, 0)-y) \leq \alpha$.
This concludes the proof of Theorem~\ref{t:cex}. \qed

\bibliographystyle{alpha}
\bibliography{bvk}

\end{document}